\newtheorem{theorem}{Theorem}[section]
\newtheorem{lemma}[theorem]{Lemma}
\newtheorem{proposition}[theorem]{Proposition}
\newtheorem{problem}[theorem]{Problem}
\theoremstyle{definition}
\newtheorem{remark}[theorem]{Remark}
\newcommand{\cl}[1]{\ensuremath{\overline{{#1}}}}
\newcommand{\da}[2]{\ensuremath{\langle{#1},{#2}\rangle}}
\newcommand{\DEL}[2]{\ensuremath{\mathbf{\Delta}^{#1}_{#2}}}
\newcommand{\ep}{\varepsilon}
\newcommand{\free}[1]{\ensuremath{\mathcal{F}({#1})}}
\newcommand{\gur}{\mathbb{G}}
\newcommand{\IF}{\ensuremath{\mathbf{IF}}}
\newcommand{\map}[3]{\ensuremath{{#1}:{#2}\to{#3}}}
\newcommand{\n}[1]{\ensuremath{\left\|{#1}\right\|}}
\newcommand{\ndot}{\ensuremath{\left\|\cdot\right\|}}
\newcommand{\PI}[2]{\ensuremath{\mathbf{\Pi}^{#1}_{#2}}}
\newcommand{\pn}[2]{\ensuremath{\left\|{#1}\right\|_{#2}}}
\newcommand{\prsb}{\mathcal{B}}
\newcommand{\prsbap}{\mathcal{B}_{\mathrm{AP}}}
\newcommand{\Q}{\mathbb{Q}}
\newcommand{\R}{\mathbb{R}}
\newcommand{\restrict}[1]{\ensuremath{\mathord{\upharpoonright}_{#1}}}
\newcommand{\se}{\mathcal{SB}}
\newcommand{\set}[2]{\ensuremath{\left\{{#1}\;:\;\,{#2}\right\}}}
\newcommand{\SIG}[2]{\ensuremath{\mathbf{\Sigma}^{#1}_{#2}}}
\newcommand{\Tr}{\mathbf{Tr}}
\newcommand{\ury}{\mathbb{U}}
\newcommand{\vs}{\mathcal{R}}
\newcommand{\vsc}{\vs_{\mathrm{cpt}}}
\newcommand{\vsd}{\vs_{\mathrm{dis}}}
\newcommand{\vspu}{\vs_{\mathrm{p1u}}}
\newcommand{\vsap}{\vs_{\mathrm{AP}}}
\newcommand{\vsbap}{\vs_{\mathrm{BAP}}}
\newcommand{\WF}{\ensuremath{\mathbf{WF}}}
\DeclareMathOperator{\aspan}{span}
\DeclareMathOperator{\dom}{dom}
\DeclareMathOperator{\Lip}{Lip}
\renewcommand{\geq}{\geqslant}
\renewcommand{\leq}{\leqslant}
\numberwithin{equation}{section}
\begin{document}

\title[Lipschitz-free spaces and Bossard's reduction argument]{Lipschitz-free spaces and Bossard's reduction argument}

\date{\today}

\author[R. J. Smith]{Richard J. Smith}
\address[R. J. Smith]{School of Mathematics and Statistics, University College Dublin, Belfield, Dublin 4, Ireland}
\email{richard.smith@maths.ucd.ie}

\begin{abstract}
We set up a descriptive set-theoretic framework to study Lipschitz-free spaces and use the reduction argument of Bossard to prove several results. We prove two universality results: if a separable Banach space is isomorphically universal for the class of Lipschitz-free spaces over the countable complete discrete metric spaces then it is isomorphically universal for the class of separable Banach spaces, and if a complete separable metric space is Lipschitz universal for the same class of metric spaces then it is Lipschitz universal for all separable metric spaces. We also show that there exist countable complete discrete metric spaces whose Lipschitz-free spaces fail the bounded approximation property and are thus not isomorphic to any dual Banach space. Finally, we calculate the descriptive complexity of the classes of separable Banach spaces and separable Lipschitz-free spaces having the approximation property.
\end{abstract}
	
\keywords{Lipschitz-free space, universality, descriptive set theory, approximation property, bounded approximation property}
\subjclass[2020]{Primary 46B20; Secondary 03E15}
\maketitle

\section{Introduction}\label{sect_intro}

Let $M$ be a metric space with a base point denoted by $0$. Let $\Lip_0(M)$ denote the Banach space of real-valued Lipschitz functions that vanish at $0$, endowed with the norm that assigns to each $f \in \Lip_0(M)$ its optimal Lipschitz constant. Recall that the map $\map{\delta}{M}{\Lip_0(M)^*}$ given by $\da{f}{\delta(x)}=f(x)$, $x \in M$, $f \in \Lip_0(M)$, is an isometric embedding, and that we can define the Lipschitz-free space (hereafter free space) $\free{M}$ over $M$ as the closed linear span of the image $\delta(M) \subseteq \Lip_0(M)^*$ (where $\delta(M\setminus\{0\})$ is linearly independent).

For essential information on the fundamental properties of free spaces, we refer the reader to \cites{godefroy:kalton:03,weaver:18}; in the latter they are known as Arens-Eells spaces. Free spaces have close ties to metric geometry, optimal transport theory and the linear and non-linear theory of Banach spaces, and have been the subject of intense study by the functional analysis community in the last two decades. Despite the ease with which they can be defined and the considerable progress made in understanding them over the intervening years, their structure is still relatively poorly understood and lots of ``elementary'' questions about them persist unanswered.

One of the enduring themes of this research has been to find conditions on metric spaces to decide whether their corresponding free spaces are linearly isomorphic or not. If $M$ is Lipschitz isomorphic to a subspace of $N$ then $\free{M}$ is linearly isomorphic to a subspace of $\free{N}$, and if $M$ and $N$ are Lipschitz isomorphic then $\free{M}$ and $\free{N}$ are linearly isomorphic. However, the converses to these statements fail dramatically. In particular, it is not easy to find families of metric spaces whose corresponding free spaces are pairwise non-isomorphic.

The first example of an uncountable family of separable metric spaces whose free spaces are pairwise non-isomorphic was given in \cite{hajek:lancien:pernecka:16}. These metric spaces are all homeomorphic to the Cantor set $2^\omega$. Two more uncountable families of this kind are given in \cites{basset:25,basset:lancien:prochazka:25}. In \cite{basset:25}, each metric space in the family is countable and complete, while in \cite{basset:lancien:prochazka:25} each one is countable complete and discrete. The free spaces corresponding to the first family are non-isomorphic because the weak-fragmentability (or weak Szlenk) index of each one is distinct (and countable). The same holds in the case of the second family, this time with respect to the dentability index (not to be confused with the $w^*$-dentability index).

These results have consequences for universality. Recall that a metric space is purely 1-unrectifiable (p1u) if and only if it admits no Lipschitz isomorphic copy of any compact subset of $\R$ having strictly positive Lebesgue measure \cite{kirchheim:94}. By \cite{aliaga:gartand:petitjean:prochazka:22}, a complete separable metric space $M$ is p1u if and only if $\free{M}$ has the Radon-Nikod\'ym property (RNP); equivalently the dentability index of $\free{M}$ is countable. Because the dentability index of each free space associated with the second family is distinct, the set of indices must be unbounded in $\omega_1$. Thus, if a complete separable metric space has a free space that is isomorphically universal for the class of free spaces over countable complete discrete metric spaces, it cannot be p1u. Consequently, if a complete separable metric space is Lipschitz universal for the class of countable complete discrete metric spaces, it cannot be p1u \cite{basset:lancien:prochazka:25}*{Corollary 4.7}.

In this paper we use a tool from descriptive set theory, specifically Bossard's reduction argument (see \cite{dodos:10}*{p.~31}), to build on this universality result.

\begin{theorem}\label{thm:universal_1}
Let $X$ be a separable Banach space that is isomorphically universal for the class of free spaces over countable complete discrete metric spaces. Then $X$ is isomorphically universal for the class of separable Banach spaces. 
\end{theorem}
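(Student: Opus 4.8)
The plan is to prove the contrapositive by means of a Bossard-style reduction: assuming that $X$ is \emph{not} isomorphically universal, I would exhibit a Borel map $\Phi\colon\Tr\to\se$, $T\mapsto\free{M_T}$, witnessing that the coanalytic-complete set $\WF$ of well-founded trees is in fact analytic, which is absurd. Throughout I work in the standard Borel space $\se$ of separable Banach spaces and use that the isomorphic embeddability relation is analytic; in particular, for fixed $X$ the set $E_X=\set{Y\in\se}{Y\hookrightarrow X}$ is $\SIG{1}{1}$. I also use that $\WF$ is $\PI{1}{1}$-complete and hence not $\SIG{1}{1}$.

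The heart of the matter is a construction assigning to each tree $T$ a separable metric space $M_T$, Borel in $T$, with two properties. First, if $T\in\WF$ then $M_T$ is countable, complete and discrete, so that $\free{M_T}$ lies in the class $\mathcal{C}$ of free spaces over countable complete discrete metric spaces. Second, if $T\notin\WF$ then $\free{M_T}$ is isomorphically universal. For the latter I would arrange that an infinite branch of $T$ forces the completion $\cl{M_T}$ to contain a bi-Lipschitz copy of $C[0,1]$; since a Lipschitz embedding of $C[0,1]$ into $\cl{M_T}$ induces an isomorphic embedding $\free{C[0,1]}\hookrightarrow\free{\cl{M_T}}=\free{M_T}$, and since $C[0,1]$ is itself isomorphic to a complemented subspace of $\free{C[0,1]}$ \cite{godefroy:kalton:03}, this makes $\free{M_T}$ universal. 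As $M\mapsto\free{M}$ is Borel, the composite $\Phi$ is a Borel map into $\se$.

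With $\Phi$ in hand the reduction is immediate. Suppose, towards a contradiction, that $X$ is universal for $\mathcal{C}$ but not universal outright, and set $W=\Phi^{-1}(E_X)=\set{T\in\Tr}{\free{M_T}\hookrightarrow X}$, an analytic set. If $T\in\WF$ then $\free{M_T}\in\mathcal{C}$, so $\free{M_T}\hookrightarrow X$ by hypothesis and $T\in W$; if $T\notin\WF$ then $\free{M_T}$ is universal while $X$ is not, so $\free{M_T}\not\hookrightarrow X$ and $T\notin W$. Hence $W=\WF$, exhibiting the $\PI{1}{1}$-complete set $\WF$ as a member of $\SIG{1}{1}$, which is impossible. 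Therefore $X$ must be universal, as required.

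I expect the single genuine obstacle to be the construction of $T\mapsto M_T$: one must reconcile, within a single Borel assignment, the discreteness and completeness needed in the well-founded case with the accumulation along infinite branches needed to plant a copy of $C[0,1]$ in the completion in the ill-founded case. This is where I would lean on and adapt the tree-indexed metric spaces of \cite{basset:lancien:prochazka:25}, whose construction already produces countable complete discrete spaces of prescribed complexity; the task is to extend it to all of $\Tr$ so that well-foundedness is \emph{precisely} the dividing line between membership of $\free{M_T}$ in $\mathcal{C}$ and universality of $\free{M_T}$, all the while keeping the dependence on $T$ Borel.
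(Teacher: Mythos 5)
Your overall architecture coincides with the paper's: a Bossard-style reduction on $\Tr$ in which well-founded trees yield countable complete discrete spaces and ill-founded trees yield free spaces that are universal, combined with the facts that embeddability into a fixed $X$ is $\SIG{1}{1}$ and that $\WF$ is $\PI{1}{1}$-complete. (The paper phrases the endgame as overspill rather than contrapositive: from $\WF=\Delta^{-1}(\vsd)\subset\Delta^{-1}(\langle X\rangle^{\vs}_{\hookrightarrow})$ and non-analyticity of $\WF$ it extracts a single $T\in\IF$ with $\free{M_{\Delta(T)}}\hookrightarrow X$, and then universality of that one free space finishes the job; this is the same dichotomy you describe.) However, there is a genuine gap, and you flag it yourself: the construction of the assignment $T\mapsto M_T$ is not an adaptation one can wave at --- it is the mathematical content of the theorem. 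The paper has to build three things from scratch: a metric $\rho$ on $\omega^{<\omega}$ with $\rho(s,t)=2^{-|s\wedge t|-1}$, so that tails of every infinite branch have small diameter; the metric $d(s,t)=\rho(s,t)+\theta(x_{|s|},x_{|t|})$, with $(x_i)$ dense in the target space and repeating infinitely often, so that the completion of \emph{every} infinite branch contains an isometric copy of the target (\Cref{lm:containing_M}) while every well-founded subtree is discrete \emph{and complete} (\Cref{lm:discrete_complete}) --- the completeness argument, which extracts a terminal node from a Cauchy sequence in a well-founded tree, is a real proof, not a formality; and the isolation device of \Cref{lm:continuous_metric}, which parks the nodes outside $T$ at uniformly separated points so that the map is defined on all of $\Tr$, takes values in metrics on the \emph{same} underlying set $\omega$, and is continuous. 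Without these your map $\Phi$ does not exist, so the proposal is a correct frame around a missing core.

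Two secondary points. First, you assert that $M\mapsto\free{M}$ is Borel; in the paper this is \Cref{pr:reduction}, whose proof (continuity of $\Xi:\vs\to\prsb$) needs McShane's extension theorem to see that the free-space norm of a finitely supported combination depends only on finitely many distances --- it should not be taken for granted. Second, your choice of $C[0,1]$ as the planted space conflicts with the isolation device: the paper glues the off-tree points at constant distance $\tfrac52$ from everything, and the triangle inequality then forces the tree part of the metric to be bounded, which is why the paper plants the \emph{bounded} space $B_{c_0}$ and invokes \cite{dutrieux:ferenczi:06} and \cite{kaufmann:15} for universality of $\free{B_{c_0}}$. Your route via $\free{C[0,1]}$ and the Godefroy--Kalton embedding $C[0,1]\hookrightarrow\free{C[0,1]}$ is mathematically sound, but you would have to redesign the gluing to accommodate an unbounded target, or replace $C[0,1]$ by a bounded Lipschitz-universal space.
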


\begin{theorem}\label{thm:universal_2}
Let $M$ be a complete separable metric space that is Lipschitz universal for the class of countable complete discrete metric spaces. Then $M$ is Lipschitz universal for the class of separable metric spaces. 
\end{theorem}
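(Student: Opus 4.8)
The plan is to prove Theorem~\ref{thm:universal_2} by running Bossard's reduction argument directly in the metric category, in close parallel with Theorem~\ref{thm:universal_1}. First I would fix a standard Borel coding of the complete separable metric spaces, realising each as a closed subset of the Urysohn space $\ury$ under the Effros Borel structure, and write $\mathcal{M}$ for the resulting standard Borel space (equivalently, code a space by a distance matrix on a fixed countable dense sequence). Two facts about this coding are needed. First, the relation ``$N$ bi-Lipschitzly embeds into $N'$'' is $\SIG{1}{1}$ on $\mathcal{M}\times\mathcal{M}$, being the projection, over a sequence of points of $N'$ and a rational Lipschitz constant, of a Borel condition; in particular $\set{N}{N \text{ bi-Lipschitzly embeds into } M}$ is analytic for each fixed $M$. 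Second, the set $\WF$ of well-founded trees on $\omega$ is $\PI{1}{1}$-complete, hence not $\SIG{1}{1}$.

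The core of the proof is a Borel reduction $\map{T\mapsto M_T}{\Tr}{\mathcal{M}}$ obeying a sharp dichotomy: if $T\in\WF$ then $M_T$ is a countable complete discrete metric space, whereas if $T\in\IF$ then $M_T$ is Lipschitz universal for the separable metric spaces. The raw material should be the tree-indexed construction behind \cite{basset:lancien:prochazka:25}, in which the metric encodes the tree scale by scale, so that on well-founded trees one obtains countable complete discrete spaces whose free-space dentability indices track the rank of $T$ and are cofinal in $\omega_1$. A convenient way to realise the dichotomy is to build a countable metric space $D_T$ depending continuously on $T$ and to set $M_T$ equal to its completion: when $T$ is well-founded, $D_T$ is already complete and uniformly discrete, so $M_T=D_T$ is countable complete discrete, whereas an infinite branch of $T$ should make the nested scales of $D_T$ accumulate, the completion adding the limit points. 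To certify Lipschitz universality in the ill-founded case it then suffices, by Aharoni's theorem that every separable metric space bi-Lipschitzly embeds into $c_0$, to arrange that an infinite branch forces a bi-Lipschitz copy of $c_0$ inside $M_T$.

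Granting such a reduction, the theorem follows quickly. Suppose $M$ is Lipschitz universal for the countable complete discrete metric spaces. For every $T\in\WF$ the space $M_T$ is of this form, and so bi-Lipschitzly embeds into $M$; hence $\WF\subseteq S$, where $S:=\set{T}{M_T \text{ bi-Lipschitzly embeds into } M}$. Because $T\mapsto M_T$ is Borel and bi-Lipschitz embeddability into the fixed space $M$ is analytic, $S$ is $\SIG{1}{1}$. If $S$ met no ill-founded tree then $S\subseteq\WF$, whence $S=\WF$ would be analytic, contradicting the $\PI{1}{1}$-completeness of $\WF$. Therefore there is $T^{\ast}\in S\cap\IF$; for this tree $M_{T^{\ast}}$ is Lipschitz universal and bi-Lipschitzly embeds into $M$, and composing bi-Lipschitz embeddings shows that every separable metric space bi-Lipschitzly embeds into $M$, as required.

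The main obstacle is the construction of the reduction, and specifically the ill-founded case. The purely $1$-unrectifiable/RNP dichotomy is by itself \emph{insufficient}: if $M$ fails to be p1u it merely contains a bi-Lipschitz copy of some positive-measure compact subset of $\R$, and such a subset is very far from Lipschitz universal (for instance it admits no bi-Lipschitz copy of a three-point equilateral space with distortion below $2$, since among any three points of $\R$ the largest pairwise distance equals the sum of the other two). Thus the dentability index, which detects only p1u, cannot on its own certify universality, and the reduction must genuinely plant a Lipschitz-universal object—a bi-Lipschitz copy of $c_0$, or of $\ury$—inside $M_T$ for ill-founded $T$, while keeping the assignment Borel and preserving both completeness and discreteness when $T$ is well-founded. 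Verifying that an infinite branch yields such a copy with uniformly controlled distortion, and that in the well-founded case the dentability index of $\free{M_T}$ matches the rank of $T$ so that these spaces remain cofinal in $\omega_1$, is where the real work lies.
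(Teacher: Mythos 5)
Your descriptive-set-theoretic frame is exactly the one the paper uses: code complete separable metric spaces by distance matrices, observe that bi-Lipschitz embeddability into a fixed $M$ gives a $\SIG{1}{1}$ set, build a Borel (in the paper, continuous) map $T\mapsto M_T$ on $\Tr$ with the dichotomy ``$T\in\WF$ implies $M_T$ countable complete discrete, $T\in\IF$ implies $M_T$ contains a Lipschitz-universal space'', and then run the overspill argument off the $\PI{1}{1}$-completeness of $\WF$. Your closing observation that the p1u/RNP dichotomy cannot do this job, and that the reduction must genuinely plant a copy of $c_0$ or $\ury$ on ill-founded trees, is precisely the paper's key insight. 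But the proposal stops exactly where the proof has to happen: the reduction itself is never constructed. You list its desired properties and point to the tree-indexed construction of Basset--Lancien--Proch\'azka as ``raw material'', yet that construction was engineered to make dentability indices cofinal in $\omega_1$, and there is no reason it satisfies the ill-founded half of your dichotomy; asserting that an infinite branch ``should make the nested scales accumulate'' to a universal object is a hope, not an argument. As it stands, the core of the proof is missing.

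For comparison, the paper fills this gap with an elementary explicit construction: put $\rho(s,t)=2^{-|s\wedge t|-1}$ on $\omega^{<\omega}$ and define $d(s,t)=\rho(s,t)+\theta(x_{|s|},x_{|t|})$, where $(x_i)$ is a dense sequence in $(\ury,\theta)$ in which every element repeats infinitely often. Along any infinite branch $B$ the levels $|s|$ exhaust $\omega$, so the completion of $(B,d)$ contains an isometric copy of $\ury$; on the other hand, for $T\in\WF$ the space $(T,d)$ is discrete and \emph{complete} --- and this completeness is itself a nontrivial lemma, proved by a well-foundedness argument showing every $d$-Cauchy sequence in $T$ is eventually constant, a point your sketch simply asserts (note the space is discrete but not uniformly discrete, so completeness is not automatic). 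Continuity of $T\mapsto M_T$ is arranged by sending each $s\notin T$ to a far-away duplicate point at fixed distance, rather than deleting it, so the underlying set never changes with $T$. Finally, your last requirement --- that for well-founded $T$ the dentability index of $\free{M_T}$ match the rank of $T$ and be cofinal in $\omega_1$ --- is not needed at all: dispensing with every ordinal-index computation is exactly what Bossard's reduction argument buys, and only the two-sided dichotomy plus Borelness enters the final overspill step, which you do execute correctly.
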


Another enduring strand of this research has been to identify which free spaces have or fail one of Grothendieck's classical approximation properties. Recall that a Banach space $X$ is said to have the approximation property (AP) if, given compact $K \subseteq X$ and $\ep>0$, there exists a bounded linear finite-rank operator $\map{T}{X}{X}$ such that $\n{Tx-x} \leq \ep$ whenever $x \in K$. If there exists $\lambda \geq 1$ such that the operator $T$ can always be chosen to satisfy $\n{T} \leq \lambda$, then $X$ is said to have the $\lambda$-bounded approximation property ($\lambda$-BAP). We say that $X$ has the bounded approximation property (BAP) if it has the $\lambda$-BAP for some $\lambda$, and the metric approximation property (MAP) if it has the $1$-BAP.

The literature on free spaces and approximation properties is too abundant to be summarised in this short paper; we invite the interested reader to consult \cite{godefroy:24} and the introduction of \cite{smith:talimdjioski:23} and references therein for more information. We only need to recall two results for our purposes. First, it follows immediately from \cite{godefroy:kalton:03}*{Theorem 3.1} that if a separable Banach space $X$ fails the AP then so does $\free{X}$. Second, \cite{aliaga:nous:petitjean:prochazka:21}*{Corollary 2.8} implies that $\free{M}$ has the AP whenever $M$ is countable discrete and complete. To the best of my knowledge, to date there has been no example of a complete separable p1u metric space whose free space has the AP but fails the BAP. The next result shows that there are such examples and, moreover, they can be chosen to be countable complete and discrete. Again, this result is based on Bossard's argument. Another crucial ingredient is the fact that, relative to the established descriptive set-theoretic frameworks for separable Banach spaces (see \Cref{sec:prelims}), the class of separable Banach spaces that satisfies the BAP is Borel (though being analytic would have sufficed) \cite{ghawadrah:17}. 

\begin{theorem}\label{thm:fail_BAP_simple}
There exist countable complete discrete metric spaces $M$ such that $\free{M}$ fails the BAP. Consequently, $\free{M}$ is not isomorphic to a dual space.
\end{theorem}

This result can be contrasted with \cite{aliaga:gartand:petitjean:prochazka:22}*{Theorem 3.2} which states, among other things, that $\free{M}$ is isometric to a dual space whenever $M$ is a p1u metric space that is proper, i.e.~its closed balls are compact. It can also be regarded in the light of Kalton's problem of whether $\free{M}$ has the BAP whenever $M$ is uniformly discrete; see \cite{kalton:04}*{p.~185} and e.g.~\cite{godefroy:24}*{Problem 4.1} for more context. In this vein, there exists a uniformly discrete space whose free space is not isometric to a dual space  (though this particular example has the MAP) \cite{garcia-lirola:petitjean:prochazka:ruedazoca:18}*{Example 5.8}.

We prove the results above using \Cref{lem:main_tool}, which is one of our two main tools. The other main tool, \Cref{lem:combine}, will be needed for our final main result,  \Cref{thm:ap_coanalytic}, which shows that the classes of separable Banach spaces and separable free spaces having the AP are, relative to the appropriate descriptive set-theoretic interpretations, complete coanalytic sets. (We remark that this is not the first time Polish spaces of metrics have been used to study free spaces and approximation properties; see \cites{talimdjioski:23,smith:talimdjioski:24,talimdjioski:25}.)

We prove these theorems in Sections \ref{sec:results} and \ref{sect:ap}, and make some related observations along the way. The application of descriptive set theory to Banach space theory can be traced to some seminal papers of Bourgain, e.g.~\cite{bourgain:80}. It was formalised later by Bossard in another seminal paper \cite{bossard:02} and has since developed into a rich and beautiful subfield that boasts many powerful techniques and results, see e.g.~\cite{dodos:10} and references therein, and \cite{cuth:doucha:kurka:22b} and related papers for more recent developments. I speculate that techniques from descriptive set theory could be used to prove more results about free spaces in the future.  

I heard about the results in \cite{basset:lancien:prochazka:25} in a talk given by E.~Basset at the Banff International Research Station workshop on Functional and Metric Analysis and their Interactions, held at the Institute of Mathematics, University of Granada, Spain, in May 2025. It occurred to me that it might be possible to use descriptive set theory, specifically Bossard's reduction argument (see \cite{dodos:10}*{p.~31}), to build on their universality results. I would like to thank E.~Basset, G.~Lancien and A.~Proch\'azka for the fascinating exchanges that ensued. In particular, \Cref{thm:universal_2} was prompted by a question asked by G. Lancien. I would also like to thank M.~C\'uth, G.~Godefroy and A.~Rueda Zoca for several remarks which helped me when I was preparing this paper. Finally, I should stress that I have left out most of the results in \cites{basset:25,basset:lancien:prochazka:25} from this introduction purely in the interests of brevity, and I recommend that the interested reader should consult these papers in full.

\section{Notation and preliminaries}\label{sec:prelims}

We use standard Banach space notation throughout. The closed unit ball of a Banach space $X$ is denoted $B_X$. Given Banach spaces $X$ and $Y$, we write $X \hookrightarrow Y$ and $X \equiv Y$ when $X$ is linearly isomorphic to a subspace of $Y$ and $X$ is linearly isometric to $Y$, respectively. We also use these symbols in the context of metric spaces $M$ and $N$, with ``Lipschitz'' substituted for ``linear''. By \cite{weaver:18}*{Theorems 3.6 and 3.7}, if $M \hookrightarrow N$ (in the Lipschitz sense) then $\free{M}\hookrightarrow \free{N}$ (in the linear sense). We say that the Banach space $X$ is isomorphically universal for a given class $\mathcal{C}$ of Banach spaces if $Y \hookrightarrow X$ whenever $Y \in \mathcal{C}$; Lipschitz universality of metric spaces is defined analogously. 

For concepts pertaining to descriptive set theory, we follow \cite{kechris:95}. The sets of natural numbers, countable ordinals and infinite sequences of natural numbers are denoted $\omega$, $\omega_1$ and $\omega^\omega$, respectively. We identify the power set of a set $\Gamma$ with $2^\Gamma$. We denote by $\SIG{0}{\alpha}$ and $\PI{0}{\alpha}$ the additive and multiplicative Borel classes of order $\alpha<\omega_1$ in a Polish space, so that $F_\sigma$, $G_\delta$ and $F_{\sigma\delta}$-sets are $\SIG{0}{2}$, $\PI{0}{2}$ and $\PI{0}{3}$-sets, and so on. Analytic, coanalytic and Borel subsets of Polish spaces are called $\SIG{1}{1}$, $\PI{1}{1}$ and $\DEL{1}{1}$-sets, respectively. Given subsets $A \subseteq X$, $B \subseteq Y$ of Polish spaces $X$ and $Y$, we say that $A$ is reducible to $B$ if there is a continuous function $\map{f}{X}{Y}$ such that $A=f^{-1}(B)$. If $\mathbf{\Gamma}(X)$ is a class of subsets of a Polish space $X$, a subset $B \subseteq Y$ of a Polish space $Y$ is called $\mathbf{\Gamma}$-hard if $A$ is reducible to $B$ whenever $A \in \mathbf{\Gamma}(X)$ and $X$ is Polish and zero-dimensional, and $\mathbf{\Gamma}$-complete if $B$ is $\mathbf{\Gamma}$-hard and in addition belongs to $\mathbf{\Gamma}(Y)$.

The tree of finite (possibly empty) sequences of natural numbers is denoted $\omega^{<\omega}$. Given $s,t \in \omega^{<\omega}$ and $n\in\omega$, we define $|s|=\dom s \in \omega$ and $s^\frown n = s \cup \{(|s|,n)\}$, we write $s \preccurlyeq t$ when $t$ is an extension of $s$, i.e.~$s=t\restrict{|s|}$, and $s \perp t$ when $s,t$ are incomparable, and finally we let $s \wedge t$ denote the greatest element of $\omega^{<\omega}$ less than both $s$ and $t$. The space of trees, i.e.~downwards-closed subsets of $\omega^{<\omega}$, is denoted $\Tr$. With respect to the pointwise topology this is closed in $2^{\omega^{<\omega}}$ and hence compact. Within $\Tr$ we consider the subsets $\WF$ and $\IF$ of well-founded and ill-founded trees, which are well-known to be $\PI{1}{1}$ and $\SIG{1}{1}$-complete, respectively. 

Next, we set up an appropriate descriptive set-theoretic framework for the class of separable infinite-dimensional free spaces. We shall consider the Polish space $[0,\infty)^{\omega\times\omega}$ endowed with the pointwise topology, and the set $\vs \subseteq [0,\infty)^{\omega\times\omega}$ of metrics on $\omega$. This space was introduced by Vershik in \cite{vershik:98} and features again in e.g.~\cite{vershik:04} and \cite{clemens:12} (in these works $\vs$ is defined instead as the set of all pseudo-metrics on $\omega$, that is, functions $d\in[0,\infty)^{\omega\times\omega}$ that are metrics except that it is possible to have distinct $i,j \in \omega$ satisfying $d(i,j)=0$). It is easily seen that $\vs$ is a $G_\delta$ in $[0,\infty)^{\omega\times\omega}$ and is thus Polish as well. This space can be used to code the class of complete separable infinite metric spaces. Given such a space $(M,\rho)$, fix a dense sequence of distinct points $(x_i) \subseteq M$ and consider $d \in \vs$ given by $d(i,j)=\rho(x_i,x_j)$, $i,j \in \omega$; then $(M,\rho) \equiv M_d$, where $M_d$ is the completion of $(\omega,d)$. Conversely any such completion is separable and infinite. Because a free space is separable and infinite-dimensional if and only if its underlying metric space is separable and infinite, and because the free space over a metric space is isometric to that of its completion, $\vs$ can be seen as a natural coding of the class of separable infinite-dimensional free spaces. As a change of base point does not alter the isometric structure of the corresponding free space, we shall always regard $0\in\omega$ as the base point. 

Bossard coded the class of separable infinite-dimensional Banach spaces as a $\DEL{1}{1}$ subset $\se$ of the set of closed subsets of $C(2^\omega)$ equipped with the Effros-Borel structure. Following a programme initiated in \cite{godefroy:saint-raymond:18}, the authors of \cites{cuth:doucha:kurka:22a,cuth:doucha:kurka:22b} (and related papers) observed that it is possible to code the class of separable infinite-dimensional Banach spaces using an arguably canonical Polish space $\prsb$. First they let $V$ be the countable $\Q$-linear vector space of elements of $c_{00}$ having rational entries, and then they define $\prsb$ to be the set of all functions $\mu \in [0,\infty)^V$ which can be extended (uniquely) to a norm on $c_{00}$. Endowed with the pointwise topology, $\prsb$ is a $G_\delta$ in $[0,\infty)^V$ and is thus Polish. Every separable and infinite-dimensional Banach space $(X,\ndot)$ has representatives in $\prsb$. Given a linearly independent sequence $(x_i) \subseteq X$ having dense span, define $\mu(v)=\n{\sum_{i \in \omega} v(i)x_i}$, where $v \in V$. Then $X \equiv X_\mu$, where $X_\mu$ is the completion of $(V,\mu)$; conversely, every such completion is a separable and infinite-dimensional Banach space. We will work with $\prsb$ instead of $\se$ because, in our opinion, it is easier to use and more compatible with $\vs$. While not directly relevant to this paper, another advantage of using $\prsb$ is that Baire category arguments can be applied, e.g.~the set of $\mu \in \prsb$ such that $X_\mu \equiv \gur$, where $\gur$ is the Gurari{\u\i} space, is a dense $G_\delta$ \cite{cuth:doucha:kurka:22b}*{Theorem 4.1}, hence $\gur$ can be thought of as the generic separable Banach space. According to \cite{cuth:doucha:kurka:22a}*{Theorem 8}, there is a Borel isomorphism $\map{\Theta}{\prsb}{\se}$ such that $\Theta(\mu) \equiv X_\mu$ for all $\mu \in \prsb$. It follows easily from this and \cite{bossard:02}*{Theorem 2.3 (ii)} that, given any separable Banach space $X$, the set $\langle{X}\rangle^{\prsb}_{\hookrightarrow}:=\set{\mu \in \prsb}{X_\mu \hookrightarrow X}$ is $\SIG{1}{1}$.

Concerning Polish versus Effros-Borel structures, we should mention that it is possible to code the class of complete infinite separable metric spaces using the set of closed infinite subsets of the Urysohn universal separable metric space $\ury$, again equipped with the Effros-Borel structure, see e.g.~\cites{gao:09,gao:kechris:03}. However we shall work with Vershik's space $\vs$ for the reasons given above. Also, analogously to $\gur$, the set of $d\in\vs$ such that $M_d \equiv \ury$ is a dense $G_\delta$ \cite{vershik:04}*{Theorems 1 and 3}. Thus $\free{\ury}$, called the Holmes space in \cite{fonf:wojtaszczyk:08}, could be regarded as the generic separable free space. An immediate consequence of this and \cite{fonf:wojtaszczyk:08}*{Theorem 2.1} is that the set of $d \in \vs$ for which $\free{M_d}$ has the metric approximation property is residual. This observation complements \Cref{thm:ap_coanalytic}.

\section{Three main results}\label{sec:results}

Our first result will allow us to apply to $\vs$ one of the observations about $\prsb$ recorded above.

\begin{proposition}\label{pr:reduction}
There is a continuous map $\map{\Xi}{\vs}{\prsb}$ such that $X_{\Xi(d)}\equiv \free{M_d}$ for all $d\in\vs$.
\end{proposition}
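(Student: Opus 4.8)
The plan is to realise $\free{M_d}$ through its canonical representative in $\prsb$. Fix $d\in\vs$, let $\map{\delta_d}{M_d}{\free{M_d}}$ be the isometric embedding, normalised so that $\delta_d(0)=0$ is the base point, and consider the sequence $(\delta_d(i+1))_{i\in\omega}$ in $\free{M_d}$. It is linearly independent, being part of the linearly independent set $\delta_d(M_d\setminus\{0\})$, and its linear span is dense: since $\omega$ is dense in $M_d$ and $\delta_d$ is an isometry, each $\delta_d(x)$ is a limit of points $\delta_d(i)$ with $i\in\omega$, so the closed linear span of $\{\delta_d(i):i\geq1\}$ (recall $\delta_d(0)=0$) is all of $\free{M_d}$. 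Thus $(\delta_d(i+1))_{i\in\omega}$ is a dense linearly independent sequence, and following the construction of $\prsb$ in Section~\ref{sec:prelims} I would set
\[
	\Xi(d)(v)=\pn{\sum_{i\in\omega}v(i)\,\delta_d(i+1)}{\free{M_d}},\qquad v\in V,
\]
so that $\Xi(d)\in\prsb$ and $X_{\Xi(d)}\equiv\free{M_d}$ hold automatically.

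The substance of the proposition is then the continuity of $\Xi$. For the pointwise topology on $\prsb\subset[0,\infty)^V$ this amounts to showing that $d\mapsto\Xi(d)(v)$ is continuous on $\vs$ for each fixed $v\in V$. The key reduction is that for a finitely supported element the free-space norm depends only on the finite metric carried by its support. Fix $v$ supported in $\{0,\dots,n\}$ and put $F=\{0,1,\dots,n+1\}$. By the definition of the norm on $\free{M_d}\subset\Lip_0(M_d)^*$ (the unit ball of $\Lip_0(M_d)$ being symmetric),
\[
	\Xi(d)(v)=\sup\set{\sum_i v(i)f(i+1)}{f\in B_{\Lip_0(M_d)}},
\]
and the objective depends only on $f\restrict{F}$. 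By the McShane extension theorem every $1$-Lipschitz $g\colon F\to\R$ with $g(0)=0$ extends to a member of $B_{\Lip_0(M_d)}$, while conversely every $f\in B_{\Lip_0(M_d)}$ restricts to such a $g$; hence
\[
	\Xi(d)(v)=\sup\set{\sum_i v(i)g(i+1)}{g\colon F\to\R,\ g(0)=0,\ \Lip(g)\leq1},
\]
a quantity depending on $d$ only through the finitely many numbers $d(j,k)$, $j,k\in F$.

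Writing $N(\rho)$ for this last supremum when $d\restrict{F\times F}$ is replaced by an arbitrary metric $\rho$ on $F$, it remains to prove that $\rho\mapsto N(\rho)$ is continuous; this is the step I expect to demand the most care, albeit it is elementary. The device is a rescaling estimate. Suppose $\rho,\rho'$ are metrics on $F$ with $|\rho(j,k)-\rho'(j,k)|\leq\ep$ for all $j,k$, and put $\delta'=\min_{j\neq k}\rho'(j,k)>0$. Any $\rho$-$1$-Lipschitz $g$ with $g(0)=0$ satisfies $|g(j)-g(k)|\leq\rho(j,k)\leq\rho'(j,k)+\ep\leq(1+\ep/\delta')\rho'(j,k)$, so $(1+\ep/\delta')^{-1}g$ is $\rho'$-$1$-Lipschitz and comparison of the defining suprema yields $N(\rho)\leq(1+\ep/\delta')N(\rho')$. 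Exchanging $\rho$ and $\rho'$ gives the reverse inequality with $\delta=\min_{j\neq k}\rho(j,k)$. Since a genuine metric on the finite set $F$ has $\delta>0$, and this minimum stays bounded away from $0$ as $\rho$ ranges over a neighbourhood of a fixed metric, letting $\ep\to0$ proves continuity of $N$ at every metric on $F$. Finally, as the pointwise topology on $\vs$ controls the finitely many coordinates $d(j,k)$ with $j,k\in F$, this delivers the continuity of $d\mapsto\Xi(d)(v)$, and hence of $\Xi$, completing the proof.
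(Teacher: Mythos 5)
Your proposal is correct and its core is the same as the paper's: the identical formula for $\Xi(d)(v)$ as the free-space norm of $\sum_i v(i)\delta(i+1)$, and the use of McShane's extension theorem to show that this quantity depends on $d$ only through the finitely many distances among $\{0,1,\dots,n+1\}$. The one genuine difference is at the final step, and it is in your favour. The paper's proof stops after observing that $\Xi(d')(v)=\Xi(d)(v)$ whenever $d'$ agrees with $d$ \emph{exactly} on the relevant finite set of coordinates; since such sets of $d'$ are not neighbourhoods in the pointwise topology, this literally establishes only that $\Xi(\cdot)(v)$ factors through a finite-coordinate projection, leaving implicit the continuity of the induced function of those finitely many distances. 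Your rescaling estimate supplies precisely that step: if $\rho$ and $\rho'$ are metrics on the finite set $F$ with $|\rho-\rho'|\leq\ep$ entrywise and $\delta'=\min_{j\neq k}\rho'(j,k)$, then every $\rho$-$1$-Lipschitz $g$ is $(1+\ep/\delta')$-Lipschitz for $\rho'$, giving $N(\rho)\leq(1+\ep/\delta')N(\rho')$ and symmetrically, and the minimum distance stays bounded away from $0$ on a pointwise neighbourhood of a fixed metric because $F$ is finite. Together with the duality formula for the norm (valid since $B_{\Lip_0(M_d)}$ is symmetric and $0\in F$ lets McShane extensions keep the base-point condition), this yields honest continuity of each coordinate $d\mapsto\Xi(d)(v)$, hence of $\Xi$. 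So your write-up is, if anything, more complete than the paper's at that point; everything else (linear independence and density of $(\delta(i+1))_{i\in\omega}$, hence $X_{\Xi(d)}\equiv\free{M_d}$) matches the paper's argument.
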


\begin{proof}
Recall that, regardless of $d\in \vs$, the sequence $(\delta(i+1))_{i\in\omega}$ is always linearly independent and has dense span in $\free{M_d}$. Thus if we define $\Xi:\vs\to\prsb$ by
\[
\Xi(d)(v) = \n{\sum_{i\in\omega} v(i)\delta(i+1)}_{\free{M_d}}, \quad v \in V,
\]
we quickly conclude that $X_{\Xi(d)}\equiv\free{M_d}$.

It remains to show that $\Xi$ is continuous. Let $d \in \vs$,  $v_1,\ldots,v_k \in V$ and $\ep \in (0,1)$. To establish the continuity of $\Xi$, we shall find $N\in\omega$ and $\delta > 0$ such that
\[
|\Xi(d')(v_\ell) - \Xi(d)(v_\ell)| < \ep, \quad 1 \leq \ell \leq k,
\]
whenever $d' \in \vs$ satisfies $|d'(i,j)-d(i,j)|<\delta$ for all $i,j \leq N$. 

To begin, we pick $N \in \omega$ such that $v_\ell(i)=0$ whenever $1 \leq \ell \leq k$ and $i \geq N$. Then set
\[
r = \min\set{d(i,j)}{i < j \leq N} > 0, \quad L = 1 + \max\set{\Xi(d)(v_\ell)}{1 \leq \ell \leq k},
\]
$\alpha=\ep/(L+1)$ and $\delta = \alpha r$.

Now consider any $d' \in \vs$ such that $|d'(i,j)-d(i,j)|<\delta$ whenever $i,j \leq N$. Let $f \in B_{\Lip_0(M_{d'})}$. Given that
\begin{equation}\label{eqn:reduction}
(1-\alpha)d(i,j) \leq d(i,j)-\delta < d'(i,j) < d(i,j) + \delta \leq (1+\alpha)d(i,j), \quad i< j \leq N,
\end{equation}
the restriction of $f$ to $N+1$ is $(1+\alpha)$-$d$-Lipschitz. By McShane's extension theorem \cite{weaver:18}*{Theorem 1.33}, there exists $g \in (1+\alpha)B_{\Lip_0(M_d)}$ such that $g(i)=f(i)$ whenever $i \leq N$. It follows that
\begin{align*}
\sum_{i\in\omega} v_\ell(i)f(i+1) = \sum_{i<N} v_\ell(i)f(i+1) &= \sum_{i<N} v_\ell(i)g(i+1)\\
&= \sum_{i\in\omega} v_\ell(i)g(i+1) \leq (1+\alpha)\Xi(d)(v_\ell), \quad 1 \leq \ell \leq k.
\end{align*}
As these inequalities hold for all such $f$, we obtain
\[
\Xi(d')(v_\ell) \leq (1+\alpha)\Xi(d)(v_\ell) < \Xi(d)(v_\ell) + \ep < L, \quad 1 \leq \ell \leq k.
\]

Now we swap $d$ and $d'$ and largely repeat the above. Given $f \in B_{\Lip_0(M_d)}$, by \eqref{eqn:reduction} and McShane's theorem, we can find $g \in (1-\alpha)^{-1}B_{\Lip_0(M_{d'})}$ such that $g(i)=f(i)$ whenever $i \leq N$. It follows that
\[
\Xi(d)(v_\ell) \leq \frac{1}{1-\alpha}\Xi(d')(v_\ell) = \bigg(1+\frac{\ep}{L}\bigg)\Xi(d')(v_\ell) < \Xi(d')(v_\ell)+\ep, \quad 1 \leq \ell \leq k.
\]
This completes the proof.
\end{proof}

\begin{proposition}\label{co:analytic}
Given a separable Banach space $X$ and a complete separable metric space $M$, the sets
\[
\langle X\rangle^{\vs}_{\hookrightarrow} := \set{d \in \vs}{\free{M_d} \hookrightarrow X}\quad\text{and}\quad \langle M\rangle^{\vs}_{\hookrightarrow} := \set{d \in \vs}{M_d \hookrightarrow M}
\]
are $\SIG{1}{1}$.
\end{proposition}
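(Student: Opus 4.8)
The plan is to treat the two sets separately, starting with the Banach-space embedding set, which follows almost immediately from the machinery already in place. Recall from the preliminaries that $\langle X\rangle^{\prsb}_{\hookrightarrow}=\set{\mu \in \prsb}{X_\mu \hookrightarrow X}$ is $\SIG{1}{1}$. By Proposition~\ref{pr:reduction} we have $X_{\Xi(d)}\equiv\free{M_d}$ for every $d\in\vs$, and since linearly isometric spaces embed (or fail to embed) into $X$ simultaneously,
\[
\langle X\rangle^{\vs}_{\hookrightarrow}=\set{d \in \vs}{X_{\Xi(d)}\hookrightarrow X}=\Xi^{-1}\big(\langle X\rangle^{\prsb}_{\hookrightarrow}\big).
\]
As $\Xi$ is continuous and the preimage of an analytic set under a continuous map between Polish spaces is analytic, $\langle X\rangle^{\vs}_{\hookrightarrow}$ is $\SIG{1}{1}$.

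For the metric embedding set the plan is to realise $\langle M\rangle^{\vs}_{\hookrightarrow}$ as the projection of a closed set and then invoke that continuous images of Borel sets are analytic. The first and most delicate step is to reduce the existence of a Lipschitz embedding $M_d \hookrightarrow M$ to the existence of a bi-Lipschitz map defined merely on the dense countable subset $\omega \subset M_d$. Writing $\rho$ for the metric on $M$, I would argue that $M_d \hookrightarrow M$ holds if and only if there exist a sequence $(z_i) \in M^\omega$ and constants $0<c\leq C$ with
\[
c\,d(i,j) \leq \rho(z_i,z_j) \leq C\,d(i,j) \qquad \text{for all } i,j\in\omega.
\]
The restriction of any embedding to $\omega$ furnishes such data; conversely, any map $i \mapsto z_i$ obeying these estimates is bi-Lipschitz on $(\omega,d)$ (injectivity being automatic since $d(i,j)>0$ for $i\neq j$), hence uniformly continuous, and so extends uniquely to a map on the completion $M_d$ by completeness of $M$; passing to limits shows the same estimates persist on $M_d$, so the extension is an embedding.

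The second step is to set up the parameter space. Since $M$ is Polish, the product $\vs \times M^\omega \times (0,\infty)^2$ is Polish, and I would consider
\[
B=\set{(d,(z_i),c,C)}{c\leq C \text{ and } c\,d(i,j)\leq\rho(z_i,z_j)\leq C\,d(i,j) \text{ for all } i,j}.
\]
This set is closed: for each fixed pair $i,j$ the two inequalities are closed conditions because $d \mapsto d(i,j)$ is continuous on $\vs$ and $\rho$ is continuous on $M\times M$, so $B$ is the intersection over all $i,j$ of these closed conditions together with the closed condition $c\leq C$. By the equivalence above, $\langle M\rangle^{\vs}_{\hookrightarrow}$ is exactly the projection of $B$ onto the $\vs$-coordinate, hence the continuous image of a closed (in particular Borel) subset of a Polish space, and therefore $\SIG{1}{1}$.

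I expect the main obstacle to be the reduction in the second paragraph: the careful justification that embeddings of the completion $M_d$ correspond precisely to bi-Lipschitz maps on the dense set $\omega$, including the extension to and preservation of the estimates on the completion. Once $B$ is seen to be closed, the projection argument is entirely routine.
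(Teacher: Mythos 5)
Your treatment of $\langle X\rangle^{\vs}_{\hookrightarrow}$ is exactly the paper's: both pull back the analytic set $\langle X\rangle^{\prsb}_{\hookrightarrow}$ under the continuous map $\Xi$ from \Cref{pr:reduction}, so nothing to add there. For $\langle M\rangle^{\vs}_{\hookrightarrow}$ you take a genuinely different route. The paper disposes of this set in one line by adapting \cite{clemens:12}*{Lemma 4}: the binary relation $\set{(d,d') \in \vs\times\vs}{M_d \hookrightarrow M_{d'}}$ is $\SIG{1}{1}$, and one specialises the second coordinate to a code for $M$. You instead give a self-contained witness argument: $M_d \hookrightarrow M$ if and only if there exist $(z_i)\in M^\omega$ and $0<c\leq C$ with $c\,d(i,j)\leq\rho(z_i,z_j)\leq C\,d(i,j)$ for all $i,j$; the set $B$ of such witnesses is closed in the Polish space $\vs\times M^\omega\times(0,\infty)^2$; and $\langle M\rangle^{\vs}_{\hookrightarrow}$ is its projection, hence analytic. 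Your reduction to the countable dense set is sound --- completeness of $M$ yields the unique Lipschitz extension, and both estimates pass to limits, so the extension is bi-Lipschitz onto its image --- and the closedness and projection steps are routine, as you say. What your route buys: it needs no external citation, and it treats the target $M$ directly as a fixed Polish space, so it works uniformly even when $M$ is finite (where the paper's specialisation of Clemens' relation would strictly require coding $M$ as some $M_{d'}$, available only for infinite $M$; that case is trivial anyway since then $\langle M\rangle^{\vs}_{\hookrightarrow}=\varnothing$). What the paper's route buys: brevity, plus the more general fact that Lipschitz embeddability is analytic as a relation on $\vs\times\vs$. In substance, your witness set is precisely what the proof of Clemens' lemma produces, simplified by having a concrete Polish target.
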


\begin{proof}
Evidently $\langle X\rangle^{\vs}_{\hookrightarrow} = \Xi^{-1}(\langle{X}\rangle^{\prsb}_{\hookrightarrow})$ is $\SIG{1}{1}$. Similarly to \cite{clemens:12}*{Lemma 4}, the set
\[
\set{(d,d') \in \vs\times\vs}{M_d \hookrightarrow M_{d'}}
\]
is $\SIG{1}{1}$, whence $\langle M\rangle^{\vs}_{\hookrightarrow}$ is $\SIG{1}{1}$.
\end{proof}

Next, we determine the descriptive complexity of the classes of countably infinite complete discrete metric spaces and complete separable p1u metric spaces.

\begin{proposition}\label{pr:coanalytic}
The set $\vsd := \set{d \in \vs}{M_d \text{ is discrete}}$ is $\PI{1}{1}$.
\end{proposition}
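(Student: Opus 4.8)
The plan is to work with the complement and show that $\vs\setminus\vsd=\set{d\in\vs}{M_d\text{ is not discrete}}$ is $\SIG{1}{1}$, whence $\vsd$ is $\PI{1}{1}$. The crux is to replace the statement about the completion $M_d$ by an equivalent statement about the countable metric space $(\omega,d)$ itself. The criterion I would establish first is: $M_d$ fails to be discrete if and only if $(\omega,d)$ admits a Cauchy sequence that is not eventually constant.

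To justify this, suppose $\sigma\in\omega^\omega$ is Cauchy in $(\omega,d)$ and not eventually constant. A Cauchy sequence taking only finitely many values must eventually be constant, since two distinct values would be forced arbitrarily close together; hence $\sigma$ assumes infinitely many distinct values and converges in $M_d$ to a point that is a limit of distinct points, and is therefore not isolated. Conversely, if $x\in M_d$ is not isolated then, as $\omega$ is dense in $M_d$, there is a sequence of distinct points of $\omega$ converging to $x$, and this sequence is Cauchy and not eventually constant. The value of this single criterion is that it simultaneously captures the two mechanisms by which $M_d$ can fail to be discrete — a point of $\omega$ being an accumulation point, and the completion adjoining a new (necessarily non-isolated) limit point — and this is the main conceptual point of the argument.

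With the criterion in hand the remainder is a routine complexity computation. I would introduce
\[
C=\set{(d,\sigma)\in\vs\times\omega^\omega}{\sigma\text{ is Cauchy in }(\omega,d)\text{ and not eventually constant}}
\]
and verify that it is Borel. Writing the Cauchy condition as $\forall k\,\exists N\,\forall m,n\geq N\ d(\sigma(m),\sigma(n))\leq 1/k$, one checks that for fixed $m,n,k$ the set $\set{(d,\sigma)}{d(\sigma(m),\sigma(n))\leq 1/k}$ is closed: if $(d_i,\sigma_i)\to(d,\sigma)$ then $\sigma_i(m),\sigma_i(n)$ are eventually equal to $\sigma(m),\sigma(n)$, and the corresponding distance condition passes to the limit. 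Thus the Cauchy condition is $\PI{0}{3}$, while ``not eventually constant'', namely $\forall N\,\exists m,n\geq N\ \sigma(m)\neq\sigma(n)$, is plainly $\PI{0}{2}$, so $C$ is Borel. By the criterion, $\vs\setminus\vsd$ is exactly the projection of $C$ onto the first coordinate; since the projection of a Borel subset of a product of Polish spaces along the $\omega^\omega$ factor is $\SIG{1}{1}$, we conclude that $\vsd$ is $\PI{1}{1}$.

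The only real obstacle I anticipate is the verification of the criterion, and specifically the care needed to confirm that a not-eventually-constant Cauchy sequence genuinely witnesses a non-isolated point of $M_d$ and, conversely, that every non-isolated point of $M_d$ is approached by such a sequence from $\omega$; once this equivalence is secured, the descriptive bound follows immediately from projecting a Borel set.
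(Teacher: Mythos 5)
Your proposal is correct and follows essentially the same route as the paper: the paper's proof also reduces discreteness of $M_d$ to the statement that every Cauchy sequence in $(\omega,d)$ is eventually constant, builds the same Borel set of pairs $(d,\sigma)$ (its $A\setminus(\vs\times B)$ is exactly your $C$), and obtains $\vs\setminus\vsd$ as its projection onto the first coordinate, hence $\SIG{1}{1}$. The only difference is that you spell out the verification of the equivalence, which the paper states as clear.
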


\begin{proof}
It is clear that $d \in \vsd$ if and only if every Cauchy sequence in $(\omega,d)$ is eventually constant. Given $k,m \in \omega$, define the sets
\[
 A_{k,m} = \set{(d,(x_i)) \in \vs \times \omega^\omega}{d(x_i,x_j) \leq 2^{-k} \text{ for all }i,j \geq m},
\]
and
\[
 B_m = \set{(x_i) \in \omega^\omega}{x_i=x_m \text{ whenever }i \geq m},
\]
which are closed in $\vs \times \omega^\omega$ and $\omega^\omega$, respectively. Then $(d,(x_i)) \in A:=\bigcap_{k \in \omega}\bigcup_{m\in \omega} A_{k,m}$ if and only if $(x_i) \in \omega^\omega$ is Cauchy in $(\omega,d)$, and $(x_i) \in B:=\bigcup_{m\in\omega} B_m$ if and only if $(x_i)$ is eventually constant. Therefore $\vs\setminus\vsd$ is $\SIG{1}{1}$ because it is the image of $A\setminus (\vs\times B) \subseteq \vs\times\omega^\omega$ under the projection onto the first coordinate.
\end{proof}

\begin{proposition}\label{pr:p1u_coanalytic}
The set $\vspu := \set{d \in \vs}{M_d \text{ is p1u}}$ is $\PI{1}{1}$.
\end{proposition}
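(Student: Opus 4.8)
The plan is to show that the complement $\vs\setminus\vspu$ is $\SIG{1}{1}$ by exhibiting it as the projection of a Borel subset of a product of Polish spaces. Recall that $M_d$ fails to be p1u precisely when some compact $K\subset\R$ with $\lambda(K)>0$ (Lebesgue measure) admits a bi-Lipschitz embedding $K\hookrightarrow M_d$; after translating and passing to $K\cap[N,N+1]$ for a suitable $N\in\omega$, I may assume $K\subset[0,1]$. I would code such a witness by a constant $L\in[1,\infty)$, a sequence $(t_n)\in[0,1]^\omega$ (intended to enumerate a dense subset of $K:=\cl{\{t_n:n\in\omega\}}$), and a double sequence $(y_{n,k})\in\omega^{\omega\times\omega}$ such that, for each $n$, the sequence $(y_{n,k})_k$ is $d$-Cauchy and hence converges in $M_d$ to a point $p_n$, the intended value of the embedding at $t_n$. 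The witness relation $R$ then consists of $(d,L,(t_n),(y_{n,k}))$ for which each $(y_{n,k})_k$ is $d$-Cauchy, the two-sided Lipschitz estimate holds, and $\lambda(\cl{\{t_n:n\in\omega\}})>0$.

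First I would record the Borelness of the easy constraints. Exactly as in the proof of Proposition~\ref{pr:coanalytic}, the requirement that each $(y_{n,k})_k$ be $d$-Cauchy is Borel, since for fixed $i,j\in\omega$ the evaluation $d\mapsto d(i,j)$ is continuous. Granting this, the distance between $p_n$ and $p_m$ equals $\lim_k d(y_{n,k},y_{m,k})$, and the bi-Lipschitz estimate
\[
\tfrac1L\,|t_n-t_m|\;\le\;\lim_k d(y_{n,k},y_{m,k})\;\le\;L\,|t_n-t_m|\qquad(n,m\in\omega)
\]
can be written as a countable conjunction of conditions of the form $\forall j\,\exists K_0\,\forall k\ge K_0$ applied to continuous functions, hence is Borel. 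That these conditions capture failure of p1u in both directions is routine: from the data, $t_n\mapsto p_n$ is bi-Lipschitz on the dense set $\{t_n\}\subset K$ and extends by completeness of $M_d$ to a bi-Lipschitz map on $K$, witnessing $K\hookrightarrow M_d$; conversely, any bi-Lipschitz $f\colon K\to M_d$ yields such data upon choosing a dense sequence $(t_n)$ in $K$ and approximating each $f(t_n)$ by points of $\omega$.

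The one genuinely delicate requirement is that $\lambda(\cl{\{t_n:n\in\omega\}})>0$, and verifying that this is Borel in $(t_n)$ is where I expect the main work to lie. Here the plan is to factor through the hyperspace $\mathcal K([0,1])$ of compact subsets of $[0,1]$ with its Vietoris topology. I would first check that the closure map $(t_n)\mapsto\cl{\{t_n:n\in\omega\}}$ is Borel from $[0,1]^\omega$ to $\mathcal K([0,1])$: on the Vietoris subbasis, $\{C:C\cap U\ne\emptyset\}$ pulls back to the open set $\{(t_n):\exists n,\ t_n\in U\}$, while $\{C:C\subset U\}$ pulls back to $\{(t_n):\inf_n\operatorname{dist}(t_n,[0,1]\setminus U)>0\}$, which is $\SIG{0}{2}$; as the Vietoris topology is second countable, this suffices. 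Since Lebesgue measure is upper semicontinuous on $\mathcal K([0,1])$ (and hence Borel), the condition $\lambda(\cl{\{t_n:n\in\omega\}})>0$ is Borel. Consequently $R$ is a Borel subset of $\vs\times\bigl([1,\infty)\times[0,1]^\omega\times\omega^{\omega\times\omega}\bigr)$, its projection onto $\vs$ equals $\vs\setminus\vspu$, and the latter is therefore $\SIG{1}{1}$, giving $\vspu\in\PI{1}{1}$. (One could alternatively invoke the characterisation of p1u via the countability of the dentability index of $\free{M_d}$, but the direct embedding argument above seems cleaner and more self-contained.)
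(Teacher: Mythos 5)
Your proof is correct, but it takes a genuinely different route from the one written in the paper --- in fact it is precisely the ``more direct'' argument via Kirchheim's characterisation that the paper alludes to immediately after its own proof but does not carry out. The paper's proof is pure machinery: it composes the continuous map $\Xi$ of \Cref{pr:reduction} with the Borel isomorphism $\map{\Theta}{\prsb}{\se}$ of \cite{cuth:doucha:kurka:22a}, invokes Bossard's theorem that the codes of spaces with the RNP form a $\PI{1}{1}$ set, and then uses the theorem of \cite{aliaga:gartand:petitjean:prochazka:22} that a complete separable metric space is p1u if and only if its free space has the RNP, so that $\vspu$ is the continuous preimage of a $\PI{1}{1}$ set. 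You instead work entirely at the level of the metric space: you code a failure of pure 1-unrectifiability by a witness $(L,(t_n),(y_{n,k}))$, check that the Cauchy, bi-Lipschitz and positive-measure conditions are Borel --- the only delicate one being $\lambda\bigl(\cl{\set{t_n}{n\in\omega}}\bigr)>0$, which you handle correctly via Borelness of the closure map into the hyperspace and upper semicontinuity of Lebesgue measure there --- and conclude that $\vs\setminus\vspu$ is $\SIG{1}{1}$ as the projection of a Borel set, with the back-and-forth translation between witnesses and bi-Lipschitz embeddings of a positive-measure compactum supplied by density of $\omega$ in $M_d$ and completeness of $M_d$. The trade-off is clear: the paper's proof is three lines long given its heavy imported inputs (Bossard's RNP result, the \v{C}uth--Doucha--Kurka isomorphism, the Aliaga--Gartland--Petitjean--Proch\'azka equivalence) and showcases the utility of \Cref{pr:reduction}, whereas yours is self-contained and elementary, needing only Kirchheim's characterisation \cite{kirchheim:94} and standard descriptive-set-theoretic facts, in the same spirit as the paper's own proof of \Cref{pr:coanalytic}. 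Two cosmetic points only: the integer $N$ with $\lambda(K\cap[N,N+1])>0$ should range over $\mathbb{Z}$ rather than $\omega$, since a positive-measure compactum need not meet $[0,\infty)$; and the two-sided estimate must be read in conjunction with the Cauchy condition so that the limits $\lim_k d(y_{n,k},y_{m,k})$ exist --- neither affects correctness.
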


\begin{proof}
Using the Borel isomorphism $\map{\Theta}{\prsb}{\se}$ and \cite{bossard:02}*{Corollary 3.3}, the set of $\mu \in \prsb$ such that $X_\mu$ has the RNP is $\PI{1}{1}$. Now apply \Cref{pr:reduction} and \cite{aliaga:gartand:petitjean:prochazka:22}*{Theorem 4.6}.
\end{proof}

\Cref{pr:p1u_coanalytic} can be proved more directly using Kirchheim's characterisation of p1u spaces mentioned above. Later, in \Cref{pr:complete}, we upgrade $\vsd$ and $\vspu$ to $\PI{1}{1}$-complete. For this and the main results we will need the following lemma, which is a tool for generating continuous maps from power sets to $\vs$. 

\begin{lemma}\label{lm:continuous_metric}
Let $(M,d)$ be a metric space, $A \subseteq \omega$, $\map{\phi_0}{A}{M}$ and $\map{\phi_1,\phi_2}{\omega\setminus A}{M}$ injections with pairwise disjoint ranges, and let $\map{\tau}{\omega\setminus A}{\Gamma}$, where $\Gamma$ is a set. Then the map $\map{\Psi}{2^\Gamma}{M^\omega}$ given by
\[
\Psi(E)(n) = \begin{cases} \phi_0(n) & \text{if }n \in A,\\
\phi_1(n) & \text{if }n \in \omega\setminus A \text{ and }\tau(n) \notin E,\\
\phi_2(n) & \text{if }n \in \omega\setminus A \text{ and }\tau(n) \in E, \end{cases} \quad E \in 2^\Gamma,
\]
is continuous. Moreover, the map $\map{\Delta}{2^\Gamma}{[0,\infty)^{\omega\times\omega}}$ given by
\[
\Delta(E)(m,n)=d(\Psi(E)(m),\Psi(E)(n)), \quad m,n \in \omega,
\]
is continuous and takes values in $\vs$. Finally, $(\omega,\Delta(E))$ is isometric to $(\Psi(E)(\omega),d)$.
\end{lemma}

\begin{proof}
To check continuity of $\Psi$, let $E \in 2^\Gamma$ and let $F \subseteq \omega$ be finite. Define the finite sets $G=\tau(F\setminus A) \cap E$ and $H=\tau(F\setminus A)\setminus E$; then $G\subseteq E$, $H \cap E=\varnothing$ and $\Psi(E')(n)=\Psi(E)(n)$ whenever $n \in F$, $E' \in 2^\Gamma$, $G\subseteq E'$ and $H \cap E' =\varnothing$. Therefore $\Psi$ is continuous. As $\Delta$ is the composition of $\Psi$ and another continuous map, it is also continuous. Since $\phi_0,\phi_1,\phi_2$ are injective and have pairwise disjoint ranges, $\Psi(E)$ is injective for all $E \in 2^\Gamma$. It follows that $\Delta(E) \in \vs$. The last assertion is obvious.
\end{proof}

\begin{remark}
It is a simple exercise to show that $\vsc := \set{d \in \vs}{M_d \text{ is compact}}$ is $\PI{0}{3}$. As a straightforward application, we can show that $\vsc$ is $\PI{0}{3}$-complete using \Cref{lm:continuous_metric}. Indeed, let
\[
P = \set{E \in 2^{\omega\times\omega}}{\set{j \in \omega}{(i,j) \in E} \text{ is finite for all }i \in \omega}.
\]
According to \cite{kechris:95}*{Exercise 23.1}, $P$ is $\PI{0}{3}$-complete. Now set $\Gamma=\omega\times\omega$, $M=\ell_1(\Gamma \cup \{\infty\})$, $A=\{0\}$ and $\phi_0=\{(0,0)\}$. Let $d$ be the usual metric on $M$, let $\map{\tau}{\omega\setminus A}{\Gamma}$ be a bijection, let $(e_\gamma)_{\gamma \in \Gamma\cup\{\infty\}}$ be the usual basis, and define injections $\map{\phi_1,\phi_2}{\omega\setminus A}{M}$ by
\[
\phi_1(n) = 2^{-n}e_\infty \quad\text{and}\quad \phi_2(n) = 2^{-p(\tau(n))}e_{\tau(n)},
\]
where $\map{p}{\omega\times\omega}{\omega}$ is the projection onto the first coordinate. Evidently the ranges of $\phi_0,\phi_1,\phi_2$ are pairwise disjoint. Let $\Delta$ be the map furnished by \Cref{lm:continuous_metric}. Then it is straightforward to check that $M_{\Delta(E)} = (\omega,\Delta(E))$ always, and this space is compact whenever $E \in P$ and not compact otherwise. Therefore $\Delta$ is a continuous reduction of $P$ to $\vsc$, which implies that the latter is $\PI{0}{3}$-complete as claimed. The precise complexity of a few other standard classes of complete separable metric spaces can be determined in a similar manner.
\end{remark}

We will be interested in metrics $\rho$ on $\omega^{<\omega}$ satisfying the following property.
\begin{align}
 &\text{Given $\ep>0$ and an infinite branch $B \subseteq \omega^{<\omega}$, there exists $n\in\omega$ such that} \nonumber\\
 &\text{$\rho(s,t) < \ep$ whenever $s,t \in B$ and $|s|,|t| \geq n$.} \label{eqn:metric}
\end{align}

In the next result and a number of others, $\cl{M}$ shall denote the completion of the metric space $M$.

\begin{lemma}\label{lm:containing_M}
Let $M=(M,\theta)$ be a complete separable metric space, let $(x_i)_{i\in\omega}$ be a dense sequence in $M$ whose elements repeat infinitely often, let $\rho$ be a metric on $\omega^{<\omega}$ satisfying \eqref{eqn:metric} and define a second metric $\eta$ on $\omega^{<\omega}$ by
 \[
  \eta(s,t) = \rho(s,t)+\theta(x_{|s|},x_{|t|}).
 \]
Then whenever $T \in \IF$ and $B \subseteq T$ is an infinite branch, there exists an isometric embedding $\map{\phi}{M}{\cl{(B,\eta)} \subseteq \cl{(T,\eta)}}$. Moreover, if $M$ is complete then there is a 1-Lipschitz retraction $\map{r}{\cl{(T,\eta)}}{\phi(M)}$.
\end{lemma}

\begin{proof}
Let $q \in \omega^{\omega}$ such that $B=\set{q\restrict{n}}{n \in \omega}$. We define an isometric embedding $\map{\phi}{M}{\cl{(B,\eta)}}$ as follows. Given $x \in M$, because the elements of $(x_i)$ repeat infinitely often, there exist $i_0 < i_1 < i_2 < \ldots$ such that $\theta(x_{i_k},x)\to 0$. Given that $\rho$ satisfies \eqref{eqn:metric}, $(q\restrict{i_k})$ is $\rho$-Cauchy, hence $(q\restrict{i_k})$ is also $\eta$-Cauchy. Let $\phi(x) = \lim_k q\restrict{i_k} \in \cl{(B,\eta)}$. This is well-defined because if $j_0 < j_1 < j_2 < \ldots$ also satisfy $\theta(x_{j_k},x)\to 0$ then $\eta(q\restrict{i_k},q\restrict{j_k}) = \rho(q\restrict{i_k},q\restrict{j_k})+\theta(x_{i_k},x_{j_k}) \to 0$, again by \eqref{eqn:metric}. Next, we check that $\phi$ is an isometric embedding. With $x$ and the $i_k$ as above, let $y \in M$ and this time let $j_0 < j_1 < j_2 < \ldots$ such that $\theta(y,x_{j_k})\to 0$. Then once more by \eqref{eqn:metric},
\[
 \eta(\phi(x),\phi(y)) = \lim_k \eta(q\restrict{i_k},q\restrict{j_k}) = \lim_k \rho(q\restrict{i_k},q\restrict{j_k})+\theta(x_{i_k},x_{j_k}) = \theta(x,y).
\]
Finally, assume $M$ is complete. Define $\map{\zeta}{T}{M}$ by $\zeta(s)=x_{|s|}$. This map is $1$-Lipschitz because $\theta(\zeta(s),\zeta(t)) = \theta(x_{|s|},x_{|t|}) \leq \eta(s,t)$, $s,t \in T$. Let $\map{r}{\cl{(T,\eta)}}{\cl{(B,\eta)}}$ be the $1$-Lipschitz extension of $\phi \circ \zeta$ to $\cl{(T,\eta)}$. By the continuity of $r$ and the completeness of $M$, we have $r(\cl{(T,\eta)}) \subseteq \phi(M)$. Moreover $r(\phi(x))=\phi(x)$ whenever $x \in M$. Indeed, given $i_0 < i_1 < i_2 < \ldots$ such that $\theta(x_{i_k},x)\to 0$, we have $\phi(x)=\lim_k q\restrict{i_k}$, so $r(\phi(x)) = \lim_k\phi(\zeta(q\restrict{i_k}))=\lim_k \phi(x_{i_k})=\phi(x)$.
\end{proof}

If $(M,\theta)$ above is perfect then there is no need to insist that the elements of $(x_i)$ repeat infinitely often.

We now define a metric $\rho$ on $\omega^{<\omega}$ that we fix hereafter. Let $(e_s)_{s\in \omega^{<\omega}}$ denote the usual basis of $c_{00}(\omega^{<\omega})$ and define the injection $\map{\Omega}{\omega^{<\omega}}{c_{00}(\omega^{<\omega})}$ by
\[
 \Omega(s) = \sum_{r \preccurlyeq s} 2^{-|r|}e_r, \quad s \in \omega^{<\omega}.
\]
Then define $\rho(s,t)=\pn{\Omega(s)-\Omega(t)}{\infty}$, $s,t \in \omega^{<\omega}$. We see that $\rho$ satisfies \eqref{eqn:metric}. If $s \perp t$ then $s \wedge t \prec s,t$ and
\begin{equation}\label{eqn:metric_calc}
 \rho(s,t) = \pn{\sum_{s \wedge t \prec r \preccurlyeq s} 2^{-|r|}e_r - \sum_{s \wedge t \prec r \preccurlyeq t} 2^{-|r|}e_r}{\infty} = 2^{-|s \wedge t|-1},
\end{equation}
and we get the same expression for $\rho(s,t)$ if $s$ and $t$ are distinct and comparable. Hence \eqref{eqn:metric} holds.

\begin{lemma}\label{lm:discrete_complete}
Let $\eta$ be the metric from \Cref{lm:containing_M}, where $(M,\theta)$ is an arbitrary separable metric space and $\rho$ is as above. Then $(\omega^{<\omega},\eta)$ is discrete, and in addition $(T,\eta)$ is complete whenever $T \in \mathbf{WF}$.
\end{lemma}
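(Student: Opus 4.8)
The plan is to treat the two assertions separately, using throughout that the $\theta$-term is nonnegative, so that $d(s,t) \geq \rho(s,t)$ for all $s,t \in \omega^{<\omega}$.

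For discreteness, I would note that by \eqref{eqn:metric_calc} any two distinct $s,t \in \omega^{<\omega}$ satisfy $\rho(s,t) = 2^{-|s\wedge t|-1}$, and since $s \wedge t \preccurlyeq s$ forces $|s \wedge t| \leq |s|$, we obtain $d(s,t) \geq \rho(s,t) \geq 2^{-|s|-1}$ for every $t \neq s$. Thus the open $d$-ball of radius $2^{-|s|-1}$ about $s$ contains $s$ alone, so every point is isolated and $(\omega^{<\omega},d)$ is discrete. This part is routine; the content of the lemma lies in completeness.

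For completeness, since the space is discrete the only convergent sequences are the eventually constant ones, so it suffices to prove that every $d$-Cauchy sequence $(s_n)$ in $T$ is eventually constant; its eventual value is then one of the $s_n$, hence lies in $T$. As $d \geq \rho$, such a sequence is also $\rho$-Cauchy, and I would argue by contradiction: assuming $(s_n)$ is not eventually constant, I would manufacture an infinite branch of $T$.

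The branch construction is where well-foundedness enters, and I expect it to be the main obstacle. First I would observe that the range $\{s_n : n \in \omega\}$ must be infinite: were it finite, the minimum of $\rho$ over its finitely many distinct pairs would be strictly positive, and $\rho$-Cauchyness would force the sequence to be eventually constant. Passing to a subsequence $(t_j)$ of pairwise distinct terms, which is still $\rho$-Cauchy, \eqref{eqn:metric_calc} shows that $\rho(t_i,t_j) \to 0$ is equivalent to $|t_i \wedge t_j| \to \infty$. Hence for each $k$ there is an index beyond which all $t_j$ share a common initial segment $b_k$ of length $k$; these segments are nested, so they determine a single $q \in \omega^\omega$ with $q\restrict{k} = b_k$ for every $k$. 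Since each $b_k$ is an initial segment of some $t_j \in T$ and $T$ is downward closed, every $b_k$ lies in $T$, so $\{q\restrict{k} : k \in \omega\}$ is an infinite branch of $T$, contradicting $T \in \WF$. Therefore $(s_n)$ is eventually constant, and $(T,d)$ is complete.
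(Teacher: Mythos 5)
Your proof is correct, and the completeness half takes a genuinely different route from the paper's. The discreteness argument is identical: $d \geq \rho$ and \eqref{eqn:metric_calc} give $d(s,t) \geq 2^{-|s|-1}$ for $t \neq s$. For completeness, the paper argues directly rather than by contradiction: it runs an inductive extraction, starting from $r_0 = \varnothing$ and $L_0 = \omega$ and at each stage passing to an infinite index set $L_{k+1} \subset L_k$ whose terms all extend a common child $r_{k+1} = r_k^\frown m$; well-foundedness of $T$ forces this process to terminate at some node $r_k$, and then the separation estimate $d(s_{n_1},s_{n_2}) \geq 2^{-|r_k|-1}$, valid when $s_{n_1},s_{n_2}$ extend distinct children of $r_k$, combines with the Cauchy condition to show that $s_n = r_k$ infinitely often, so the sequence converges to the explicitly identified limit $r_k$. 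Your version instead assumes the Cauchy sequence is not eventually constant, notes its range must then be infinite, extracts a pairwise-distinct Cauchy subsequence, and uses \eqref{eqn:metric_calc} to translate the Cauchy condition into $|t_i \wedge t_j| \to \infty$; this yields arbitrarily long common initial segments which, by downward closure of $T$, assemble into an infinite branch contradicting $T \in \WF$. Both proofs rest on the same two ingredients---the formula $\rho(s,t) = 2^{-|s \wedge t|-1}$ and the absence of infinite branches in well-founded trees---but yours dispenses with the index-set machinery (the $L_k$ and $H_m$) and is shorter and more transparent about where well-foundedness enters, at the mild cost of being non-constructive, whereas the paper's argument exhibits the limit of the Cauchy sequence explicitly.
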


\begin{proof}
Fix $s \in \omega^{<\omega}$. Given $t \in \omega^{<\omega}\setminus\{s\}$, by \eqref{eqn:metric_calc} we have 
\[
 \eta(s,t) \geq \rho(s,t) = 2^{-|s \wedge t|-1} \geq 2^{-|s|-1}.
\]
Hence $\eta(s,t) \geq 2^{-|s|-1}$ whenever $t \in \omega^{<\omega}\setminus\{s\}$, meaning that $\eta$ is discrete.

Now let $T \in \mathbf{WF}$ and let $(s_n) \subseteq T$ be $\eta$-Cauchy. We find a strictly increasing finite sequence $r_0 \prec r_1 \prec r_2 \prec \ldots$ of elements of $\omega^{<\omega}$ and infinite subsets $L_0 \supseteq L_1 \supseteq L_2 \supseteq \ldots$ of $\omega$ such that $r_k \preccurlyeq s_n$ for all $n \in L_k$. Begin by setting $r_0 = \varnothing$ and $L_0=\omega$. Given $r_k$ and $L_k$, either there exists $m \in \omega$ and an infinite subset $L_{k+1} \subseteq L_k$ such that $r_k^\frown m \preccurlyeq s_n$ for all $n \in L_{k+1}$, or not. If the first case holds, set $r_{k+1} = r_k^\frown m$ and continue the process, else terminate it. Because $T \in \mathbf{WF}$, this process must terminate at some finite stage $k$. By construction we know that $L_k$ is infinite and $r_k \preccurlyeq s_n$ for all $n \in L_k$ but also, given $m \in \omega$, the set $H_m:=\set{n \in L_k}{r_k^\frown m \preccurlyeq s_n}$ must be finite. If $n_1 \in H_{m_1}$ and $n_2 \in H_{m_2}$ for distinct $m_1,m_2 \in \omega$, then $s_{n_1} \perp s_{n_2}$ and $s_{n_1} \wedge s_{n_2} = r_k$, giving
\[
 \eta(s_{n_1},s_{n_2}) \geq \rho(s_{n_1},s_{n_2}) = 2^{-|r_k|-1}.
\]
Because $(s_n)$ is $\eta$-Cauchy, it follows that $H_m$ is empty for all but finitely many $m \in \omega$, and thus $s_n = r_k$ for infinitely many $n \in L_k$. Again because $(s_n)$ is $\eta$-Cauchy, we conclude that it must converge to $r_k$. This proves that $(T,\eta)$ is complete.
\end{proof}

We make use of the last three lemmas in the next result. \Cref{lem:combine,lem:main_tool} below are our main tools.

\begin{lemma}\label{lem:combine} Let $M=(M,\theta)$ be a complete separable metric space. There exists a continuous map $\map{\Delta}{\Tr}{\vs}$ such that $\Delta(T) \in \vsd$ whenever $T \in \WF$ and $M$ is isometric to a 1-Lipschitz retract of $M_{\Delta(T)}$ whenever $T \in \IF$.
\end{lemma}

\begin{proof} Let $(\omega^{<\omega},\eta)$ be the metric space obtained from \Cref{lm:containing_M} applied to $M$. Let $W=\omega^{<\omega} \cup (\omega^{<\omega} \times \{0\})$. We extend $\eta$ to a metric on $W$, again labelled $\eta$, by setting
 \[
  \eta(x,y) = \begin{cases} 0 & \text{if }x,y \notin \omega^{<\omega},\, x = y,\\
  1 & \text{if }x,y \notin \omega^{<\omega},\, x \neq y,\\
  1+\eta(\varnothing,x) & \text{if }x \in \omega^{<\omega},\, y \notin \omega^{<\omega},\\
  1+\eta(\varnothing,y) & \text{if }x \notin \omega^{<\omega},\, y \in \omega^{<\omega}.
  \end{cases}
 \]
It is routine to verify that this extension satisfies the triangle inequality.

Set $A=\phi_0=\varnothing$, $\Gamma=\omega^{<\omega}$, let $\map{\tau}{\omega}{\Gamma}$ be a bijection, and define injections $\map{\phi_1,\phi_2}{\omega}{W}$ by $\phi_1(n)=(\tau(n),0)$ and $\phi_2(n)=\tau(n)$, $n\in\omega$. Let $\map{\Delta}{2^{\Gamma}}{\vs}$ be the continuous map furnished by \Cref{lm:continuous_metric}, which we then restrict to $\Tr$. We observe that $\Psi(T)(\omega) = T \cup ((\omega^{<\omega}\setminus T) \times \{0\})$ for all $T \in \Tr$.

Let $T\in\WF$. Using \Cref{lm:discrete_complete}, $(T,\eta)$ is discrete and complete.  Hence $(\omega,\Delta(T))$, which is isometric to $(\Psi(T)(\omega),\eta)$, is also discrete and complete because every point in $(\omega^{<\omega}\setminus T) \times \{0\}$ is $\eta$-separated from everything else by at least $1$. Therefore $\Delta(T) \in \vsd$.

Now let $T\in\IF$. We claim that $M$ is isometric to a 1-Lipschitz retract of $M_{\Delta(T)}$. First of all, the function $\map{r'}{\Psi(T)(\omega)}{T}$, defined by
\[
 r'(x) = \begin{cases} x & \text{if }x \in T,\\
  \varnothing & \text{if }x \notin T,
  \end{cases}
\]
is a $1$-Lipschitz retraction of $(\Psi(T)(\omega),\eta)$ onto $(T,\eta)$. We extend this to a $1$-Lipschitz retraction $\map{r'}{\cl{(\Psi(T)(\omega),\eta)}}{\cl{(T,\eta)}}$. According to \Cref{lm:containing_M}, there is an isometric embedding $\map{\phi}{M}{\cl{(T,\eta)}}$ and a $1$-Lipschitz retraction $\map{r}{\cl{(T,\eta)}}{\phi(M)}$. Therefore $\map{r \circ r'}{\cl{(\Psi(T)(\omega),\eta)}}{\phi(M)}$ is a 1-Lipschitz retraction. Since $M_{\Delta(T)} \equiv \cl{(\Psi(T)(\omega),\eta)}$, our claim is proved.
\end{proof}

The final lemma below uses Bossard's reduction argument \cite{bossard:02}*{Theorem 3.2}; Theorems \ref{thm:universal_1} -- \ref{thm:fail_BAP_simple} emerge as relatively straightforward consequences of it.

\begin{lemma}\label{lem:main_tool} Let $\mathcal{A} \subseteq \mathcal{R}$ have the property that $d' \in \mathcal{A}$ whenever $d' \in \vs$ and $M_{d'}$ is isometric to a $1$-Lipschitz retract of $M_d$ for some $d \in \mathcal{A}$. Assume that $\vsd\cup\mathcal{A}$ is $\SIG{1}{1}$. Then $\vsd \cup \mathcal{A}=\mathcal{R}$.
\end{lemma}

\begin{proof}
Let $d \in \mathcal{R}$. Apply \Cref{lem:combine} to $M:=M_d$. The assumption $\vsd\cup\mathcal{A}$ is $\SIG{1}{1}$ implies $\Delta^{-1}(\vsd \cup \mathcal{A})$ is also $\SIG{1}{1}$. As $\WF \subseteq \Delta^{-1}(\vsd \cup \mathcal{A})$ and $\WF$ is $\PI{1}{1}$-complete, by Suslin's theorem there exists $T \in \IF$ such that $\Delta(T) \in \vsd \cup \mathcal{A}$. Because $T \in \IF$, $M_d$ is isometric to a 1-Lipschitz retract of $M_{\Delta(T)}$. Given our assumption about $\mathcal{A}$ and the fact that $\vsd$ also enjoys this property, it follows that $d \in \vsd \cup \mathcal{A}$. Therefore $\vsd \cup \mathcal{A}=\mathcal{R}$.
\end{proof}

\begin{proof}[Proof of \Cref{thm:universal_1}]
 Let $X$ be a separable Banach space. By \Cref{co:analytic}, $\langle X \rangle^{\vs}_{\hookrightarrow}$ is $\SIG{1}{1}$. Let $d' \in \vs$ and $d \in \langle X \rangle^{\vs}_{\hookrightarrow}$ such that $M_{d'}$ is isometric to a 1-Lipschitz retract of $M_d$. By \cite{weaver:18}*{Theorems 3.6 and 3.7}, $\free{M_{d'}} \hookrightarrow \free{M_d}$ and thus $d' \in \langle X \rangle^{\vs}_{\hookrightarrow}$.
 
 Assume that $\vsd \subseteq \langle X \rangle^{\vs}_{\hookrightarrow}$. By \Cref{lem:main_tool}, $\langle X \rangle^{\vs}_{\hookrightarrow} = \vs$. As $Y \hookrightarrow \free{c_0}$ whenever $Y$ is a separable Banach space (\cite{dutrieux:ferenczi:06}*{p.~1042}) and $c_0 \equiv M_d$ for some $d \in \vs$, it follows that $Y \hookrightarrow X$ for all such $Y$.
\end{proof}

\begin{proof}[Proof of \Cref{thm:universal_2}]
 Let $M$ be a complete separable metric space. By \Cref{co:analytic}, $\langle M \rangle^{\vs}_{\hookrightarrow}$ is $\SIG{1}{1}$. Assume that $\vsd \subseteq \langle M \rangle^{\vs}_{\hookrightarrow}$. By \Cref{lem:main_tool}, $\langle M \rangle^{\vs}_{\hookrightarrow} = \vs$.
\end{proof}

\begin{proof}[Proof of \Cref{thm:fail_BAP_simple}]
Using \cite{ghawadrah:17}, \cite{cuth:doucha:kurka:22a}*{Theorem 8} and \Cref{pr:reduction}, the set
\[
\vsbap :=\set{d \in \vs}{\free{M_d} \text{ has the BAP}}
\]
is $\DEL{1}{1}$. Let $d' \in \vs$ and $d \in \vsbap$ such that $M_{d'}$ is isometric to a 1-Lipschitz retract of $M_d$. Again by \cite{weaver:18}*{Theorems 3.6 and 3.7}, $\free{M_{d'}}$ is linearly isometric to a 1-complemented subspace of $\free{M_d}$ and thus $d' \in \vsbap$. For a contradiction assume $\vsd\subseteq\vsbap$. Then $\vsbap=\mathcal{R}$ by \Cref{lem:main_tool}. However, this is false because we can pick $d \in \vs$ such that $M_d$ is isometric to a separable Banach space that fails the AP; as is pointed out in the Introduction, $d \notin \vsbap$ by \cite{godefroy:kalton:03}*{Theorem 3.1}.

Now pick $d \in \vsd\setminus\vsbap$. Again, as stated in the Introduction, $\free{M_d}$ has the AP by \cite{aliaga:nous:petitjean:prochazka:21}*{Corollary 2.8}. That $\free{M_d}$ cannot be isomorphic to a dual space is a simple consequence of the classical result of Grothendieck which asserts that a separable Banach space has the MAP whenever it has the AP and is isometric to a dual space.
\end{proof}

\begin{remark}
The set $\vsd\setminus\vsbap$ is $\PI{1}{1}$ by \Cref{pr:coanalytic} and the fact that $\vsbap$ is $\DEL{1}{1}$. \Cref{thm:fail_BAP_simple} shows that $\vsd\setminus\vsbap$ is non-empty. By a simple adjustment to the argument we see moreover that $\vsd\setminus\vsbap$ cannot be $\DEL{1}{1}$: if it was $\DEL{1}{1}$ then $\vsd \cup \vsbap$ would be as well, thus it would be equal to $\vs$ by \Cref{lem:main_tool}, which again is false. We make some remarks based on this observation. 
\begin{enumerate}
\item Given metric spaces $M$ and $N$, let us write $M\stackrel{u}{\hookrightarrow}N$ if $M$ is uniformly homeomorphic to a subspace of $N$. Similarly to \cite{clemens:12}*{Lemma 4}, the set
\[
\{(d,d') \in \vs\times\vs\,:\,M_d \stackrel{u}{\hookrightarrow} M_{d'}\}
\]
is $\SIG{1}{1}$ and thus so is $\langle M \rangle^{\vs,\,u}_{\hookrightarrow}:=\{d \in \vs\,:\,M_d \stackrel{u}{\hookrightarrow} M\}$ whenever $M$ is complete and separable. Evidently $\langle M_d \rangle^{\vs,\,u}_{\hookrightarrow} \subseteq \vsd$ whenever $d\in\vsd$. Hence, given a countable subset $C \subseteq \vsd\setminus\vsbap$, the set $\Gamma:=\bigcup_{d \in C} \langle M_d \rangle^{\vs,\,u}_{\hookrightarrow} \setminus \vsbap$ is $\SIG{1}{1}$ and is included in $\vsd\setminus\vsbap$, which is $\PI{1}{1}$ but not $\DEL{1}{1}$ from above. By Suslin's theorem there exists $d' \in \vsd\setminus(\vsbap \cup \Gamma)$. We can apply transfinite induction to this fact to obtain a sequence $(d_\alpha)_{\alpha<{\omega_1}}$ of elements of $\vsd\setminus\vsbap$ such that $M_{d_\beta}\not\stackrel{u}{\hookrightarrow}M_{d_\alpha}$ whenever $\alpha < \beta < \omega_1$. I don't know if the $d_\alpha$ can be constructed so that $\free{M_{d_\alpha}}$ is not isomorphic to $\free{M_{d_\beta}}$ whenever $\alpha\neq\beta$.
\item We can replace $\vsd$ by $\vspu$ to show that the set $\vspu\setminus\vsbap$ (which is $\PI{1}{1}$ by \Cref{pr:p1u_coanalytic}) is not $\DEL{1}{1}$. No changes to the proofs above are needed. Given $d \in \vspu$, we have $\langle \free{M_d} \rangle^{\vs}_{\hookrightarrow} \subseteq \vspu$ by \cite{aliaga:gartand:petitjean:prochazka:22}*{Theorem 4.6}. Therefore, by arguing as above, there is a sequence $(d_\alpha)_{\alpha<\omega_1}$ of elements of $\vspu\setminus\vsbap$ such that $\free{M_{d_\beta}}\not\hookrightarrow\free{M_{d_\alpha}}$ whenever $\alpha<\beta<\omega_1$. I don't know if the spaces $\free{M_{d_\alpha}}$ have the AP.
\item While the above results show that there are many examples of complete separable p1u spaces whose free spaces fail the BAP, they don't provide an explicit description of any of them. It could be instructive to obtain concrete examples.
\end{enumerate}
\end{remark}

I am grateful to G. Lancien for suggesting the following problem which \Cref{thm:fail_BAP_simple} leaves open.

\begin{problem}\label{prob:embed_in_separable_dual}
 Let $M$ be a complete separable p1u metric space. Does there exist a separable Banach space $X$ such that $\free{M} \hookrightarrow X^*$? 
\end{problem}

We finish this section with two direct applications of \Cref{lem:combine}.

\begin{proposition}\label{pr:complete}
 The sets $\vsd$ and $\vspu$ are $\PI{1}{1}$-complete.
\end{proposition}

\begin{proof}
Recall that, by Propositions \ref{pr:coanalytic} and \ref{pr:p1u_coanalytic}, these sets are $\PI{1}{1}$. Select any complete separable non-p1u metric space $M$. Then the map $\Delta$ furnished by \Cref{lem:combine} applied to $M$ witnesses the fact that $\WF$ is reducible to $\vsd$ and $\vspu$.
\end{proof}

In the final result of this section, we obtain a lower bound on the complexity of the set of metrics in $\vs$ whose free spaces satisfy the AP. The complexity calculation is completed in \Cref{sect:ap}.

\begin{proposition}\label{pr:AP_PI-hard}
 The set $\vsap:=\set{d \in \vs}{\free{M_d} \text{ has the AP}}$ is $\PI{1}{1}$-hard.
\end{proposition}

\begin{proof}
Apply \Cref{lem:combine} to a separable Banach space $M$ that fails the AP. If $T\in \WF$ then $\free{M_{\Delta(T)}}$ has the AP again by \cite{aliaga:nous:petitjean:prochazka:21}*{Corollary 2.8}. If $T \in \IF$ then $M$ is isometric to a 1-Lipschitz retract of $M_{\Delta(T)}$. Once again by \cite{weaver:18}*{Theorems 3.6 and 3.7}, $\free{M}$ is linearly isometric to a 1-complemented subspace of $\free{M_{\Delta(T)}}$. Since $\free{M}$ fails the AP, so must $\free{M_{\Delta(T)}}$. Therefore $\Delta$ witnesses the fact that $\WF$ is reducible to $\vsap$.
\end{proof}

\section{Complexity and the approximation property}\label{sect:ap}

This section is solely concerned with determining the complexity of two classes of Banach spaces having the AP. The proof of the result below is an adaptation of an argument from \cite{ghawadrah:17}. As above, we frame the result in terms of the Polish space $\prsb$ instead of the Effros-Borel space $\se$.

\begin{theorem}\label{thm:ap_coanalytic}
The sets $\prsbap:= \set{\mu \in \prsb}{X_\mu \text{ has the AP}}$ and $\vsap$ are $\PI{1}{1}$-complete.
\end{theorem}

\begin{proof}
Having in mind Propositions \ref{pr:reduction} and \ref{pr:AP_PI-hard}, it is enough to prove that $\prsbap$ is $\PI{1}{1}$. Let $(e_i)_{i\in\omega} \subseteq V$ denote the standard basis of $c_{00}$; then $(e_i)_{i\in\omega}$ is a linearly independent sequence having dense span in $X_\mu$ whenever $\mu \in \prsb$. Given $R \in \omega$, let $V_R=\Q$-$\aspan(e_i)_{i<R} \subseteq V$. Define the closed subset
\[
 RC = \set{(\mu,x) \in \prsb\times V^{\omega\times\omega}}{\mu(x(k,p)-x(k,q)) \leq 2^{-p} \text{ for all }k,p,q\in\omega,\,q \geq p}
\]
of the Polish space $\prsb\times V^{\omega\times\omega}$ (where $V$ has the discrete topology). Then, given finite $F \subseteq \omega$ and $n,k,j \in \omega$, define the closed subset
\[
 TB_{n,F,k,j} = \set{(\mu,x) \in \prsb\times V^{\omega\times\omega}}{\mu(x(k,q)-x(j,q))\leq 2^{-n} \text{ for all }q,n \in \omega,\,q \geq n+1}
\]
of $\prsb\times V^{\omega\times\omega}$ and the $\PI{0}{3}$ subset
\[
 TB = \bigcap_{n \in \omega}\bigcup_{\substack{F \subseteq \omega \\ \mathrm{finite}}}\bigcap_{k\in\omega}\bigcup_{j \in F} TB_{n,F,k,j}.
\]
Next, given $n,L,R,N \in \omega$, $v = (v_i)_{i<N} \in V_R^N$ and $\alpha=(\alpha_i)_{i<N} \in \Q^N$, define the closed subset
\[
FR_{L,N,v,\alpha} = \set{\mu \in \prsb}{\mu\bigg(\sum_{i<N} \alpha_i v_i\bigg) \leq L\mu\bigg(\sum_{i<N} \alpha_i e_i\bigg)}
\]
of $\prsb$, and the closed subset
\begin{align*}
 AP_{n,L,N,v} =& \bigg\{(\mu,x)\in \prsb\times V^{\omega\times\omega}\,:\, q \leq N \text{ and }x(k,q) \in V_N \text{ implies}\\
 & \mu\bigg(x(k,q) - \sum_{i<N}x(k,q)(i)v_i\bigg) \leq 2^{-n}+(L+2)\cdot 2^{-q}\bigg\}
\end{align*}
of $\prsb\times V^{\omega\times\omega}$. Then define the $\PI{0}{5}$ subset
\[
 AP = \bigcap_{n\in\omega}\bigcup_{L,R\in\omega}\bigcap_{N \in \omega}\bigcup_{v\in V_R^N}\bigg(AP_{n,L,N,v} \cap \bigcap_{\alpha \in \Q^N} (FR_{L,N,v,\alpha} \times V^{\omega\times\omega})\bigg).
\]
of $\prsb\times V^{\omega\times\omega}$. Now let $\mathcal{A} \subseteq \prsb$ be the $\SIG{1}{1}$ image of the projection onto the first coordinate of $(RC \cap TB)\setminus AP \subseteq \prsb \times V^{\omega\times\omega}$. We show that $\prsbap = \prsb\setminus \mathcal{A}$ and thus $\prsbap$ is $\PI{1}{1}$.

Let $\mu \in \prsb\setminus \mathcal{A}$. We prove that $X_\mu$ has the AP, i.e.~$\mu\in\prsbap$. Let $K \subseteq X_\mu$ be compact and let $\ep>0$. We can assume that $\mu(z) \leq 1$ whenever $z \in K$. Let $(z_k)_{k\in\omega}$ be a dense sequence in $K$. Then pick $x \in V^{\omega\times\omega}$ such that $\mu(x(k,q)-z_k)\leq 2^{-q-1}$ for all $k,q\in\omega$. We verify that $(\mu,x) \in RC \cap TB$. Evidently $(\mu,x) \in RC$. Now let $n \in \omega$. Because $(z_k)_{k\in\omega}$ is totally bounded, there is finite $F \subseteq \omega$ such that whenever $k\in\omega$, there exists $j \in F$ satisfying $\mu(z_k-z_j) \leq 2^{-n-1}$. By the choice of $x$,
\[
\mu(x(k,q)-x(j,q)) \leq 2^{-q} + 2^{-n-1} \leq 2^{-n}, \quad q \geq n+1.
\]
It follows that $(\mu,x) \in TB$.

As $\mu \notin \mathcal{A}$ and $(\mu,x) \in RC \cap TB$, it must be that $(\mu,x) \in AP$. Fix $n\in\omega$ such that $2^{-n} \leq \ep$. Given $N \in \omega$, by equivalence of norms on finite-dimensional spaces, there exists $\theta_N > 0$ such that
\begin{equation}\label{eqn:equivalent_norms}
\sum_{i<N} |\alpha_i| \leq \theta_N\mu\bigg(\sum_{i<N} \alpha_i e_i\bigg),\quad (\alpha_i)_{i<N} \in \R^N.
\end{equation}
As $(\mu,x) \in AP$, there exist $L,R \in \omega$ and $v^{(N)} \in V_R^N$, $N \in \omega$, such that
\[
(\mu,x) \in AP_{n,L,N,v^{(N)}} \cap \bigcap_{\alpha \in \Q^N} (FR_{L,N,v^{(N)},\alpha} \times V^{\omega\times\omega}), \quad N \in \omega.
\]
Given $\alpha \in \Q^{N}$, as $\mu \in FR_{L,N,v^{(N)},\alpha}$,
\begin{equation}\label{eqn:T_bounded}
\mu\bigg(\sum_{i<N} \alpha_i v_i^{(N)}\bigg) \leq L\mu\bigg(\sum_{i<N} \alpha_i e_i\bigg).
\end{equation}
In particular, $\mu(v^{(N)}_i) \leq L\mu(e_i)$ whenever $i<N$. Since each $(v_i^{(N)})$, $i\in\omega$, is a bounded sequence in $V_R \subseteq \aspan(e_j)_{j<R}$, by a diagonalisation argument there are $w_i \in \aspan(e_j)_{j < R}$ and $N_0 < N_1 < N_2 < \ldots$ such that $\mu\big(v^{(N_\ell)}_i-w_i\big)\to 0$ for all $i\in\omega$. Using \eqref{eqn:T_bounded} and the continuity of $\mu$, given $\alpha \in \Q^N$,
\[
\mu\bigg(\sum_{i<N} \alpha_i w_i\bigg) \leq L\mu\bigg(\sum_{i<N} \alpha_i e_i\bigg).
\]
Hence we can define a bounded $\Q$-linear operator $\map{T}{V}{\aspan(e_i)_{i<R}}$ by setting $Te_i = w_i$, $i \in \omega$, and then extending to finite $\Q$-linear combinations of the $e_i$. By uniform continuity we extend it again to a bounded linear operator on $X_\mu$ satisfying $TX_\mu \subseteq \aspan(e_i)_{i<R}$ and $\mu(T) \leq L$. 

By taking a further subsequence of natural numbers, also labelled $(N_\ell)$, we can ensure that
\begin{equation}\label{eqn:mu_estimate}
\mu\big(v_i^{(N_\ell)}-w_i\big) \leq \frac{2^{-\ell-1}}{\theta_\ell}, \quad i < \ell.
\end{equation}
Now let $k,q \in \omega$. Pick $\ell\in\omega$ large enough so that $q \leq \ell$ and $x(k,q) \in V_\ell$. Then $\ell \leq N_\ell$ implies $q \leq N_\ell$ and $x(k,q) \in V_{N_\ell}$. Because $(\mu,x) \in AP_{n,L,N_\ell,v^{(N_\ell)}}$, 
\begin{equation}\label{eqn:AP_1}
\mu\bigg(x(k,q) - \sum_{i<N_\ell}x(k,q)(i)v_i^{(N_\ell)}\bigg) \leq 2^{-n}+(L+2)\cdot 2^{-q}.
\end{equation}
We estimate
\begin{align*}
\mu\bigg(\sum_{i<\ell} x(k,q)(i)\big(v_i^{(N_\ell)}-w_i\big) \bigg) &\leq \sum_{i<\ell} |x(k,q)(i)|\mu\big(v_i^{(N_\ell)}-w_i\big)\\
&\leq \frac{2^{-\ell-1}}{\theta_\ell}\sum_{i<\ell}|x(k,q)(i)| \tag*{by \eqref{eqn:mu_estimate}}\\
&\leq 2^{-\ell-1}\mu\bigg(\sum_{i<\ell}x(k,q)(i)e_i \bigg) \tag*{by \eqref{eqn:equivalent_norms}}\\
&= 2^{-\ell-1}\mu(x(k,q)) \tag*{as $x(k,q) \in V_\ell$}\\
&\leq 2^{-\ell} \leq 2^{-q}.
\end{align*}
The penultimate inequality above holds because $\mu(x(k,q)) \leq \mu(z_k) + 2^{-1} \leq \frac{3}{2}$ for all $k,q \in \omega$.
Therefore
\begin{align*}
\mu(x(k,q)-Tx(k,q)) &= \mu\bigg(x(k,q)-\sum_{i<\ell}x(k,q)(i)w_i \bigg) \tag*{as $x(k,q) \in V_\ell$}\\
&\leq \mu\bigg(x(k,q)-\sum_{i<\ell}x(k,q)(i)v_i^{(N_\ell)} \bigg) + 2^{-q}\\
&= \mu\bigg(x(k,q)-\sum_{i<N_\ell}x(k,q)(i)v_i^{(N_\ell)} \bigg) + 2^{-q} \tag*{as $\ell \leq N_\ell$}\\
&\leq 2^{-n} + (L+3)\cdot 2^{-q} \tag*{by \eqref{eqn:AP_1}.}
\end{align*}
Fixing $k\in\omega$ and letting $q \to \infty$ yields $\mu(z_k - Tz_k) \leq 2^{-n}$. Since this holds for all $k\in\omega$, by continuity $\mu(z-Tz) \leq 2^{-n} \leq \ep$ for all $z \in K$. Therefore $X_\mu$ has the AP.

Conversely, assume that $\mu\in\prsbap$. Let $(\mu,x) \in RC \cap TB$. We show that $(\mu,x) \in AP$ and hence $\mu \in \prsb\setminus \mathcal{A}$. Let $n \in \omega$. Given $k\in\omega$, let $z_k = \lim_q x(k,q) \in X_\mu$. As $(\mu,x) \in TB$ as well, it is easy to see that $\set{z_k}{k\in\omega}$ is totally bounded and thus
\[
K:= \cl{\set{z_k}{k\in\omega}} \subseteq X_\mu
\]
is compact. As $X_\mu$ has the AP, there exists a bounded linear finite-rank operator $\map{T}{X_\mu}{X_\mu}$ such that $\mu(z-Tz) < 2^{-n}$ for all $z\in K$. Fix $L \in \omega$ such that $\mu(T)<L$. By a perturbation argument, we can assume there exists $R\in\omega$ such that $TX_\mu \subseteq \aspan(e_i)_{i<R}$. Let $B > 0$ such that $\mu(z)\leq B$ for all $z \in K$.  Given $N\in\omega$, pick $\theta_N>0$ to satisfy \eqref{eqn:equivalent_norms} above and choose $v \in V^N_R$ such that 
\begin{equation}\label{eqn:mu_estimate_2}
\mu(v_i-Te_i) \leq \frac{1}{\theta_N}\min\bigg\{\frac{2^{-N}}{B+\frac{1}{2}},L-\mu(T) \bigg\}, \quad i<N.
\end{equation}
Then, given $\alpha \in \Q^N$,
\begin{align*}
\mu\bigg(\sum_{i<N} \alpha_i v_i \bigg) &\leq \mu\bigg(\sum_{i<N} \alpha_i Te_i\bigg) + \sum_{i<N} |\alpha_i|\mu(v_i-Te_i)\\
&\leq \mu(T)\mu\bigg(\sum_{i<N}\alpha_i e_i\bigg) + (L-\mu(T))\mu\bigg(\sum_{i<N}\alpha_i e_i\bigg) \tag*{by \eqref{eqn:equivalent_norms}, \eqref{eqn:mu_estimate_2}}\\
&= L\bigg(\sum_{i<N}\alpha_i e_i\bigg).
\end{align*}
We conclude that
\[
(\mu,x) \in \bigcap_{\alpha \in \Q^N} (FR_{L,N,v,\alpha} \times V^{\omega \times \omega}).
\]

We also need to show that $(\mu,x) \in AP_{n,L,N,v}$. Assume that $q \leq N$ and $x(k,q) \in V_N$. As $(\mu,x) \in RC$ and $\mu(T) \leq L$, we have
\begin{equation}\label{eqn:AP_2}
\mu(x(k,q)-z_k) \leq 2^{-q} \quad\text{and}\quad \mu(Tx(k,q)-Tz_k) \leq L\cdot 2^{-q}.
\end{equation}
Moreover, 
\begin{align*}
\mu\bigg(Tx(k,q) - \sum_{i<N} x(k,q)(i)v_i\bigg) &\leq \sum_{i<N} |x(k,q)(i)|\mu(Te_i-v_i) \tag*{as $x(k,q) \in V_N$}\\
&\leq \frac{2^{-N}}{B+\frac{1}{2}}\mu\bigg(\sum_{i<N} x(k,q)(i)e_i\bigg) \tag*{by \eqref{eqn:equivalent_norms}, \eqref{eqn:mu_estimate_2}}\\
&= \frac{2^{-N}}{B+\frac{1}{2}}\mu(x(k,q))\\
&\leq 2^{-N} \leq 2^{-q}.
\end{align*}
The penultimate inequality above holds as $\mu(x(k,q)) \leq \mu(z_k) + 2^{-1} \leq B+\frac{1}{2}$. Given that $\mu(z_k-Tz_k) \leq 2^{-n}$, by \eqref{eqn:AP_2} and the above we conclude that
\[
\mu\bigg(x(k,q) - \sum_{i<N} x(k,q)(i)v_i\bigg) \leq 2^{-n} + (L+2)\cdot 2^{-q}.
\]
Therefore $(\mu,x) \in AP_{n,L,N,v}$. The proof is complete.
\end{proof}

\bibliography{free_bossard}

\end{document}